\documentclass{amsart}

\usepackage[utf8]{inputenc}
\usepackage{amsmath}
\usepackage{amssymb}
\usepackage{mathdots}
\usepackage{mathtools}
\usepackage{mathrsfs}
\usepackage{tikz}
\usepackage{tikz-cd}
\usetikzlibrary{decorations.markings,arrows.meta,quotes}
\usepackage[inline]{enumitem}
\usepackage{color}
\usepackage{booktabs}
\definecolor{linkcolor}{rgb}{0,0,0.6}
\usepackage{amsthm}
\usepackage{bbm}
\usepackage{multicol}

\usepackage[english]{babel}
\usepackage{csquotes}

\usepackage[colorlinks=true,
pdfstartview=Fit,
linkcolor= linkcolor,
citecolor= linkcolor,
urlcolor= linkcolor,
hyperindex=true,
hyperfigures=false]
{hyperref}

\usepackage{orcidlink}

\tikzset{
  goodcycle/.style={draw=blue!65!black,fill=blue!8,
    rounded corners=2pt,inner sep=4pt},
  badcycle/.style={draw=red!70!black,fill=red!8,
    rounded corners=2pt,inner sep=4pt}
}

\hypersetup{pdftitle={Unipotent quotients in compactified Coxeter varieties},
pdfauthor={Joseph Muller}}

\newtheorem{theorem}{Theorem}[section]
\newtheorem{proposition}[theorem]{Proposition}
\newtheorem{lemma}[theorem]{Lemma}
\newtheorem{corollary}[theorem]{Corollary}
\theoremstyle{definition}
\newtheorem{definition}[theorem]{Definition}
\theoremstyle{remark}
\newtheorem{remark}[theorem]{Remark}
\numberwithin{equation}{section}

\title[Unipotent quotients in Coxeter varieties]{Unipotent quotients in compactified Coxeter varieties}
\author{Joseph Muller\,\orcidlink{0000-0002-1546-0910}}\thanks{National Center for Theoretic Sciences, National Taiwan University, No. 1, Sec. 4, Roosevelt Rd., Taipei, Taiwan, muller@ncts.ntu.edu.tw}
\date{}

\begin{document}

\subjclass[2020]{Primary 20G40, Secondary 14F20, 14L30, 20C33}
\keywords{Deligne-Lusztig variety, Coxeter variety, Alvis-Curtis duality}

\begin{abstract}
In this paper, we extend Lusztig’s description of quotients of Coxeter varieties by unipotent radicals of parabolic subgroups, to quotients of certain open neighbourhoods in the smooth compactification of Coxeter varieties. These quotients are products of an affine space with a Coxeter variety for an associated Levi quotient, and this result holds uniformly for any connected reductive group over a finite field. As an application, we prove a conjecture of Wang computing the invariants, under the action of unipotent subgroups, of the cohomology of the Deligne-Lusztig coverings of Coxeter varieties. We also prove that derived Alvis–Curtis duality sends the compactly supported cohomology of these coverings to their ordinary cohomology. 
\end{abstract}

\maketitle

\tableofcontents

\section*{Introduction}

In \cite{deligneRepresentationsReductiveGroups1976a}, Deligne and Lusztig constructed representations of finite reductive groups using the $\ell$-adic cohomology of certain algebraic varieties associated to elements of the Weyl group. The varieties associated to Coxeter elements were studied by Lusztig in \cite{lusztigCoxeterOrbitsEigenspaces1976a}, who determined their $\ell$-adic cohomology and the action of the Frobenius. The cohomology of these varieties and of their Galois coverings with integral and modular coefficients was subsequently studied by Bonnafé and Rouquier in \cite{bonnafe_coxeter_2006}, and Dudas in \cite{dudas_coxeter_2014-1}.

Let $G$ be a connected reductive group over $\overline{\mathbb F_q}$, where $q$ is a power of a prime $p$, and let $F$ be a Frobenius morphism defining an $\mathbb F_q$-structure on $G$. Fix a Coxeter element $c$ for $(G, F)$ and write $X = X_G(c)$ for the associated Coxeter variety. A subgroup $H$ of $G$ is called rational if it is $F$-stable, and in this case $H^F$ denotes the subgroup of $F$-fixed points. For a rational parabolic $P$, denote its unipotent radical by $U_P$ and its Levi quotient by $L_P = P/U_P$. The finite group $G^F$ acts on $X$, and the quotient by $U_P^F \subset G^F$ admits a product decomposition $X/U_P^F \simeq S_P \times \mathbb G_m^{d(P)}$, where $S_P$ is a Coxeter variety for $L_P$ with its induced Frobenius morphism, and $d(P)$ is the relative corank of $P$. This decomposition is due to Lusztig, see \cite{lusztigCoxeterOrbitsEigenspaces1976a} Theorem 2.6 and the proof of Corollary 2.10. We recall its construction in Section \ref{subsec:lusztig}.

Our main result extends these decompositions across the boundary of the Bott-Samelson-Demazure-Hansen compactification $\overline X$ of $X$. This compactification is smooth and projective, and $\overline X \setminus X$ is a divisor with simple normal crossings, see for instance \cite{deligneRepresentationsReductiveGroups1976a}. Its strata $C_P$ are indexed by the rational parabolic subgroups, and the stratum $C_G = X$ is open. For each $P$, the \textit{open star} of $C_P$ is the union $C_P^* = \bigsqcup_{Q \supseteq P} C_Q$, which is an open neighbourhood of $C_P$. Theorem \ref{thm:star-quotient} shows that, after possibly inverting some coordinates on $\mathbb G_m^{d(P)}$, the preceding decomposition extends to an isomorphism
\begin{equation}\label{eq:intro-main-theo}
    C_P^*/U_P^F \xrightarrow{\sim} S_P \times \mathbb A^{d(P)}. \tag{$*$}
\end{equation}
The reduced images of the boundary divisors of $C_P^*$ correspond to the products of $S_P$ with the coordinate hyperplanes in $\mathbb A^{d(P)}$, and the reduced image of $C_P$ is $S_P \times \{0\}$. In particular, $C_P^*$ is affine and the quotient morphism $C_P^* \to C_P^*/U_P^F$ is finite flat, see Corollary \ref{cor:star-affine-flat}.

The proof, which is the subject of Section \ref{sec:geometry}, first treats the case where $P$ is a maximal proper rational parabolic, in which case $C_P^* = X \sqcup C_P$. We extend the projection $X \to S_P$ to $X \sqcup C_P$ and prove that its restriction to $C_P$ is dominant. The latter assertion follows from the fact that any $G^F$-equivariant rational map between two Coxeter varieties for the same pair $(G, F)$ is automatically dominant, see Proposition \ref{prop:dominance}. The proof concludes with a point counting argument, and the case of arbitrary rational parabolic reduces to the maximal case by taking successive quotients. 

The first application of \eqref{eq:intro-main-theo} is a proof of Wang's conjecture stated in \cite{wangCohomologieCompactificationVarietes2014}. Let $\pi: Y = Y(\dot c) \to X$ be the Deligne-Lusztig covering of $X$ associated with a representative $\dot c$ of $c$. It is a torsor under $T^{cF}$, namely the group of fixed points of the map $t \mapsto \dot cF(t)\dot c^{-1}$ defined on the rational maximal torus $T$ used to construct $c$. Fix $\Lambda = \mathbb Z/\ell^a\mathbb Z$, with $\ell \not = p$ a prime and $a \geq 1$. Let us write $j: X \hookrightarrow \overline X$ and $i_P: \overline{C_P} \hookrightarrow \overline X$ for the natural immersions. Wang's Conjecture A asserts that the restriction morphism on cohomology induces an isomorphism
\begin{equation}\label{eq:intro-wang-conj}
    R\Gamma(X, \pi_*\Lambda)^{U_P^F} \xrightarrow{\sim} R\Gamma(\overline{C_P}, i_P^*Rj_*\pi_*\Lambda), \tag{$**$}
\end{equation}
for every proper rational parabolic $P$. Wang proved this assertion for $G = \mathrm{GL}_d$ with its standard Frobenius and the Coxeter element $(1\ \ldots\ d) \in \mathfrak S_d$. His proof relies crucially on the isomorphism \eqref{eq:intro-main-theo} for maximal proper rational parabolics, which he proved using the explicit description of $\overline X$ as a sequence of blow-ups. Our main theorem \eqref{eq:intro-main-theo} supplies this geometric input uniformly for every $(G, F)$, and therefore allows us to adapt Wang's proof of \eqref{eq:intro-wang-conj} in this generality, see Theorem \ref{thm:wang}. This is the subject of Section \ref{sec:restriction}. In the case of $\mathrm{GL}_d$, Wang used \eqref{eq:intro-wang-conj} to compute the cohomology of the tamely ramified covering of Drinfeld's $p$-adic symmetric space and relate it to the local Langlands and to the Jacquet-Langlands correspondences in \cite{wangLespaceSymetriqueDrinfeld2014a, wangLespaceSymetriqueDrinfeld2017a}.

We provide another application of \eqref{eq:intro-wang-conj} concerning Alvis-Curtis duality. In Corollary \ref{cor:coxeter-duality}, we establish an isomorphism
\begin{equation}\label{eq:intro-alvis-curtis}
    D_G R\Gamma_c(Y, \Lambda) \simeq R\Gamma(Y, \Lambda), \tag{$***$}
\end{equation}
where $D_G$ is the derived Alvis-Curtis functor of \cite{cabanes_alvis-curtis_2001, cabanes_okuyamas_2009} defined with the conventions specified in Section \ref{sec:alvis-curtis}. This isomorphism respects the commuting actions of $G^F$ and $T^{cF}$. The proof uses the \v{C}ech complex for the covering of $\overline X \setminus X$ by the boundary divisors. The isomorphism \eqref{eq:intro-wang-conj} then identifies the resulting terms with $U_P^F$-invariants, which in turn identifies the resulting complex with the one defining the inverse of the Alvis-Curtis functor. Both \eqref{eq:intro-wang-conj} and \eqref{eq:intro-alvis-curtis} also hold with $\mathbb Z_{\ell}$ and $\mathbb Q_{\ell}$-coefficients.

\section{Coxeter varieties and unipotent quotients}\label{sec:notation}

Throughout, $q$ denotes a power of a prime $p$. Let $G$ be a connected reductive group over $\overline{\mathbb F_q}$, endowed with a Frobenius endomorphism $F$ defining an $\mathbb{F}_q$-structure. A subgroup is called \textit{rational} if it is $F$-stable. For a rational parabolic subgroup $P$, write $U_P$ for its unipotent radical and $L_P = P/U_P$ for its Levi quotient. 

\subsection{Coxeter varieties and their compactification}

Fix a rational Borel subgroup $B$ and a rational maximal torus $T \subset B$, and put $U = R_u(B)$. Let $W \coloneq N_G(T)/T$ be the Weyl group, and write $\Delta$ for the set of simple reflections determined by $B$. The action of $F$ on $W$ permutes $\Delta$ and we denote this permutation by $\phi$. Put $r \coloneq |\Delta/\phi|$. Choose representatives $s_1, \ldots, s_r$ of the $\phi$-orbits in $\Delta$, and set
\begin{equation*}
    c \coloneq s_1 \cdots s_r.
\end{equation*}
Thus $c$ is a \textit{Coxeter element} for $(G, F)$. Throughout the paper and unless mentioned otherwise, the element $c$ and its given reduced expression are fixed. 

Let $\mathcal B(G)$ be the variety of Borel subgroups of $G$. For $B_1, B_2 \in \mathcal B(G)$, we write $\mathrm{pos}(B_1, B_2) \in W$ for their relative position. The Deligne-Lusztig variety associated with $w \in W$ is
\begin{equation*}
    X_G(w) \coloneq \{B' \in \mathcal B(G) \mid \mathrm{pos}(B', F(B')) = w\}.
\end{equation*}
    The group $G^F$ acts on $X_G(w)$ by conjugation. We write $X \coloneq X_G(c)$, and refer to $X$ as a \textit{Coxeter variety} for $(G, F)$. Its Bott-Samelson-Demazure-Hansen compactification is
\begin{equation}\label{eq:compactification}
     \overline{X} \coloneq \left\{ (B_0, \ldots, B_r) \in \mathcal B(G)^{r+1} \ \middle|\ 
    \begin{array}{ll}
        \forall 1 \leq j \leq r, & \mathrm{pos}(B_{j-1}, B_j) \leq s_j,\\
        & B_r = F(B_0)
    \end{array} \right\}.
\end{equation}
Since the simple reflections $s_i$ are pairwise distinct, the first projection $(B_0, \ldots, B_r) \mapsto B_0$ identifies $\overline{X}$ with the Zariski closure of $X$ in $\mathcal B(G)$. Furthermore, the open subset where every relative position is $s_j$ is identified with $X$. The variety $\overline{X}$ is smooth and projective of dimension $r$, and $\overline{X} \setminus X$ is a divisor with simple normal crossings, see for instance Paragraph 9 of \cite{deligneRepresentationsReductiveGroups1976a} and Section 2.2 of \cite{wangCohomologieCompactificationVarietes2014}.

\subsection{Boundary strata and open stars}\label{subsec:strata}

For a $\phi$-stable subset $I \subseteq \Delta$, let $P_I$ be the standard rational parabolic subgroup of type $I$. Write $U_I = U_{P_I}$ and let $L_I$ be the unique Levi subgroup of $P_I$ containing $T$. The quotient map $P_I \to L_{P_I}$ restricts to an isomorphism $L_I \xrightarrow{\sim} L_{P_I}$ which we use throughout. Let $c_I$ be the subword of $c$ obtained by retaining only the letters $s_j \in I$ in their specific order. The locus in \eqref{eq:compactification} on which
\begin{equation*}
    \mathrm{pos}(B_{j-1}, B_j) = \begin{cases} 
        s_j & \text{if } s_j \in I,\\
        1 & \text{if } s_j \not \in I,
    \end{cases}
\end{equation*}
is identified with $X_G(c_I)$ via the first projection. Its connected components are indexed by $G^F/P_I^F$. The component indexed by the identity coset consists of the points with $B_0 \subseteq P_I$, and the map $B_0 \mapsto B_0 \cap L_I$ identifies it with $X_{L_I}(c_I)$, which is a Coxeter variety for $(L_I, F_{|L_I})$.

Every rational parabolic $P$ is conjugate under $G^F$ to a unique $P_I$, and we say that $P$ is a parabolic \textit{of type $I$}. We write $C_P \subset \overline{X}$ for the corresponding component of $X_G(c_I)$, and put $C_G = X$. We define
\begin{equation*}
    r(P) \coloneq |I/\phi|, \qquad d(P) \coloneq r-r(P).
\end{equation*}
In particular, $d(P)$ is the relative corank of $P$. We recall from \cite{wangCohomologieCompactificationVarietes2014} Section 2.2 that
\begin{equation}\label{eq:stratification}
    \overline{X} = \bigsqcup_P C_P, \qquad  \overline{C_P} = \bigsqcup_{Q \subseteq P} C_Q,
\end{equation}
where the unions are indexed by rational parabolic subgroups, with $G$ being allowed. The map $B_0 \mapsto B_0/U_P$ identifies $C_P$ with a Coxeter variety for $(L_P, F_P)$, where $F_P$ is the Frobenius morphism on $L_P$ induced by $F$. It also identifies $\overline{C_P}$ with the Bott-Samelson-Demazure-Hansen compactification of this Coxeter variety. In particular, $C_P$ is smooth, affine and irreducible of dimension $r(P)$. The boundary divisors of $\overline{X}$ are the $\overline{C_M}$ for $M$ maximal among proper rational parabolics. For any fixed type, these divisors are pairwise disjoint and form a single $G^F$-orbit. Indeed, the first Borel $B_0$ of a point in $\overline{C_M}$ is contained in $M$, and a Borel is contained in a unique parabolic of a given type. 

The fixed-point description in Section 2.2 of \cite{wangCohomologieCompactificationVarietes2014} gives
\begin{equation}\label{eq:fixed-locus}
    \bigl( \overline{X}^{U_P^F} \bigr)_{\mathrm{red}} = \overline{C_P}.
\end{equation}
In particular, $U_P^F$ acts trivially on $\overline{C_P}$, and the action of $P^F$ on this variety factors through $P^F/U_P^F \simeq L_P^{F_P}$. 

\begin{definition}\label{def:star}
    For a rational parabolic $P$, the \textit{open star of the stratum $C_P$} is the open subvariety 
    \begin{equation*}
        C_P^* \coloneq \bigsqcup_{Q \supseteq P} C_Q \subseteq \overline{X}.
    \end{equation*}
\end{definition}

The open star $C_P^*$ is a $P^F$-stable open subvariety of $\overline{X}$. Its complement is the union of the divisors $\overline{C_M}$ for which $M$ does not contain $P$. There are exactly $d(P)$ maximal proper rational parabolics containing $P$. Their corresponding divisors, intersected with $C_P^*$, are the irreducible components of $C_P^* \setminus X$. The intersection of these irreducible components is $C_P$. In particular, if $M$ is a maximal proper rational parabolic then $C_M^* = X \sqcup C_M$.

\subsection{Lusztig's quotient construction} \label{subsec:lusztig}

We recall the description of unipotent quotients of $X$ following the arguments of \cite{lusztigCoxeterOrbitsEigenspaces1976a} Theorem 2.6 and Corollary 2.10. We explain the constructions in detail as they will be used subsequently, but we claim no novelty for the contents of this section. 

Let $B^-$ be the Borel opposite to $B$ with respect to $T$. Let $w_0$ be the longest element of $W$. We denote by $\tau_G$ the involution induced by $w_0$ on simple reflections and on simple roots, namely
\begin{equation*}
    \tau_G(s) \coloneq w_0sw_0, \qquad  \tau_G(\alpha) \coloneq -w_0(\alpha).
\end{equation*}
We write $\alpha_j$ for the simple root corresponding to $s_j$, and we fix root parametrizations $x_{\alpha}: \mathbb G_a \xrightarrow{\sim} U_{\alpha}$ for the simple roots of $G$. We define a map
\begin{equation*}
    A: \mathbb{G}_m^r \to U, \qquad (t_1, \ldots, t_r) \mapsto x_{\tau_G(\alpha_1)}(t_1) \cdots x_{\tau_G(\alpha_r)}(t_r).
\end{equation*}
We write $\mathcal L_H(h) = h^{-1}F(h)$ for the Lang map of a connected rational subgroup $H$. By \cite{lusztigCoxeterOrbitsEigenspaces1976a} Theorem 2.6, there is a $U^F$-equivariant isomorphism
\begin{equation}\label{eq:lang-locus}
    \begin{split}
        Z_c \coloneq \{u \in U \mid \mathcal L_U(u) = A(t) \text{ for some } t \in \mathbb{G}_m^r\} & \xrightarrow{\sim} X,\\
        u & \mapsto uB^- u^{-1}.
    \end{split}
\end{equation}
For $u \in Z_c$, the parameters $t_j$ are uniquely determined and we denote them by $a_j(u)$. This defines regular morphisms $a_j:Z_c \to \mathbb{G}_m$, which we also regard as functions on $X$ through \eqref{eq:lang-locus}. The action of $U^F$ on $Z_c$ is left multiplication, so it is free. Moreover, for every $h \in U^F$ we have $\mathcal L_U(hu) = \mathcal L_U(u)$, and hence every $a_j$ is $U^F$-invariant. Let $I \subseteq \Delta$ be $\phi$-stable, and define $V_I \coloneq U \cap L_I$. 

Thus $V_I$ is the unipotent radical of the Borel $B \cap L_I \in \mathcal B(L_I)$. Upon intersecting with $U$, the Levi decomposition of $P_I$ yields the decomposition $U = U_I \rtimes V_I$. Let $\pi_I: U \to V_I$ be the corresponding projection, which commutes with $F$. If $w_I$ is the longest element of $W_I$, we denote by $\tau_I$ the corresponding involution acting on the simple reflections and simple roots of $L_I$ via
\begin{equation*}
    \tau_I(s) \coloneq w_Isw_I, \qquad  \tau_I(\alpha) \coloneq -w_I(\alpha).
\end{equation*}
Finally, we also define
\begin{equation}\label{eq:levi-word}
    c'_I \coloneq \prod_{\substack{j \text{ such that} \\ \tau_G(s_j)\in I}} \tau_I \tau_G(s_j), \qquad S_{P_I} \coloneq X_{L_I}(c'_I).
\end{equation}
The product defining $c_I'$ is taken in increasing order of $j$. Both $\tau_I$ and $\tau_G$ commute with $\phi$, thus $c'_I$ is a Coxeter element for $(L_I, F_{|L_I})$, and $S_{P_I}$ is a Coxeter variety thereof. We note that in general, $c_I \not = c_I'$, therefore $C_{P_I}$ and $S_{P_I}$ are Coxeter varieties associated to possibly different Coxeter elements. The indices omitted in the product defining $c_I'$ are
\begin{equation*}
    J(P_I) \coloneq \{j \in \{1, \ldots, r\} \mid \tau_G(s_j) \not \in I\}, \qquad |J(P_I)|= d(P_I).
\end{equation*}
The proof of the following proposition is adapted from \cite{lusztigCoxeterOrbitsEigenspaces1976a} Corollary 2.10.

\begin{proposition}\label{prop:open-quotient}
    The morphism
    \begin{equation*}
        \rho_{P_I}: X \to S_{P_I}, \qquad B' \mapsto (B' \cap P_I) U_I/U_I,
    \end{equation*}
    defined using the identification $P_I/U_I \simeq L_I$ is $P_I^F$-equivariant, with the action on $S_{P_I}$ factoring through $L_I^F$. The morphism
    \begin{equation*}
        \left(\rho_{P_I}, (a_j)_{j \in J(P_I)} \right): X \to S_{P_I} \times \mathbb G_m^{J(P_I)}
    \end{equation*}
    induces an isomorphism
    \begin{equation*}
        \theta_I: X/U_I^F \xrightarrow{\sim} S_{P_I} \times \mathbb G_m^{J(P_I)}.
    \end{equation*}
\end{proposition}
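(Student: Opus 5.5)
We work throughout on the Lang-locus model $Z_c$ of \eqref{eq:lang-locus}, where $X \simeq Z_c$ via $u \mapsto uB^-u^{-1}$ and $U^F$ acts by left multiplication.

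\textbf{Equivariance.} Since $B^- \cap P_I = B^-\cap L_I \cdot (B^- \cap U_I)$ and $B^-\cap U_I$ is trivial, for $u\in Z_c$ with $u=u'v$, $u'\in U_I$, $v=\pi_I(u)\in V_I$, we get
\begin{equation*}
    uB^-u^{-1}\cap P_I = u(B^-\cap L_I)u^{-1}.
\end{equation*}
The image of this group in $L_I$ is $v(B^-\cap L_I)v^{-1}$. Hence $\rho_{P_I}$ corresponds to $u\mapsto \pi_I(u)(B^-\cap L_I)\pi_I(u)^{-1}$. From this description, $P_I^F$-equivariance of $\rho_{P_I}$ is immediate from its intrinsic definition: conjugation by $g\in P_I^F$ preserves $P_I$ and $U_I$. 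Moreover $U_I^F$ acts trivially on the target.

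\textbf{Image of $\rho_{P_I}$.} Since $\pi_I$ commutes with $F$ and is a group homomorphism $U\to V_I$, we have $\mathcal L_{V_I}(\pi_I(u)) = \pi_I(A(t))$. Now $\pi_I$ kills the root subgroups $U_{\tau_G(\alpha_j)}$ with $\tau_G(\alpha_j)\notin I$ and fixes the others. Therefore
\begin{equation*}
    \pi_I(A(t)) = \prod_{j\notin J(P_I)} x_{\tau_G(\alpha_j)}(t_j).
\end{equation*}
By definition of $c'_I$, this is exactly the map $A_{L_I}$ attached to the Coxeter element $c'_I$ of $L_I$. Applying \eqref{eq:lang-locus} to $L_I$ then shows that $\pi_I(u)\in Z_{c'_I}\subset V_I$. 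This step is the reason for the twist by $\tau_I\tau_G$, since the $A$-map for $L_I$ uses $\tau_I$ of its simple roots. Consequently $\rho_{P_I}$ lands in $S_{P_I}$, and the functions $a_j$ for $j\notin J(P_I)$ are exactly the $a$-functions of $\pi_I(u)$.

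\textbf{Isomorphism.} The map $(\pi_I,(a_j)_{j\in J(P_I)}): Z_c\to Z_{c'_I}\times\mathbb G_m^{J(P_I)}$ is $U_I^F$-invariant, so it factors through $Z_c/U_I^F$. I would show this factored map is an isomorphism by constructing the fibres explicitly. Fix $v\in Z_{c'_I}$ and $(t_j)_{j\in J}$. Then $t$ is fully determined, because the remaining coordinates are the $a$-values of $v$. The points over $(v,t)$ are the $u=u'v$ with $u'\in U_I$ and $\mathcal L_U(u'v)=A(t)$. This condition reads
\begin{equation*}
    \mathcal L_{U_I}(u') = v\,A(t)\,F(v)^{-1},
\end{equation*}
and the right-hand side lies in $U_I$, because its image under $\pi_I$ is $v\,\mathcal L(v)\,F(v)^{-1}=1$. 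By Lang's theorem for the connected group $U_I$, the solutions $u'$ form a single $U_I^F$-torsor. Thus $Z_c\to Z_{c'_I}\times\mathbb G_m^{J}$ is a $U_I^F$-torsor: it is surjective and its fibres are exactly the $U_I^F$-orbits.

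To make this an isomorphism of varieties rather than a bijection, I would note the following. The morphism $\mathcal L_{U_I}: U_I\to U_I$ is a finite étale Galois cover with group $U_I^F$. Our map is the base change of this cover along the morphism
\begin{equation*}
    (v,t)\mapsto vA(t)F(v)^{-1}
\end{equation*}
(after the translation $u'\mapsto u'$ written in these coordinates). Hence it is a $U_I^F$-torsor in the étale topology, and the quotient is the base, giving $\theta_I$.

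\textbf{Main obstacle.} The delicate point is exactly this torsor identification: one must check that $Z_c$ is the fibre product $U_I\times_{\mathcal L,U_I}(Z_{c'_I}\times\mathbb G_m^{J})$ via $u\mapsto(\pi_I(u),a(u))$ and $u'=u\pi_I(u)^{-1}$. This is a direct algebraic verification. Finally, the isomorphism $Z_{c'_I}\simeq S_{P_I}$ from \eqref{eq:lang-locus} for $L_I$ transports the statement to $\rho_{P_I}$.
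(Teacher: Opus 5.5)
Your proposal is correct and follows the paper's proof essentially step by step. You use the Lang-locus model $Z_c$ and identify $\pi_I(A(t))$ with the $A$-map for $c'_I$, where you make explicit why the $\tau_I\tau_G$ twist is needed, a point the paper leaves implicit. You then write $u = nv$ so that $Z_c$ becomes the base change of the $U_I^F$-torsor $\mathcal L_{U_I}$ along $(v,t)\mapsto vA(t)F(v)^{-1}$, and recover $\rho_{P_I}$ from $uB^-u^{-1}\cap P_I = u(B^-\cap L_I)u^{-1}$, just as the paper does.
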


\begin{proof}
The projection $\pi_I$ restricts to the identity on $U_{\tau_G(\alpha_j)}$ when $\tau_G(s_j) \in I$, and kills this root subgroup otherwise. Consequently, $\pi_I(A(t)) = \prod_{\tau_G(s_j) \in I} x_{\tau_G(\alpha_j)}(t_j)$. We define
\begin{equation}\label{eq:retained-lang}
    Z_I \coloneq \left\{v \in V_I \ \middle| \  v^{-1}F(v) = \prod_{\substack{j \text{ such that} \\ \tau_G(s_j) \in I}} x_{\tau_G(\alpha_j)}(t_j) \text{ for some } t_j \in \mathbb G_m \right\}, 
\end{equation}
where the product is taken in increasing order of $j$. Since $\pi_I$ commutes with $F$, it maps $Z_c$ into $Z_I$. Applying \eqref{eq:lang-locus} with respect to the Coxeter variety $S_{P_I}$ for $(L_I, F_{|L_I})$ gives an isomorphism
\begin{equation*}
    Z_I \xrightarrow{\sim} S_{P_I}, \qquad v \mapsto v(B^-\cap L_I)v^{-1}.
\end{equation*}

Consider
\begin{equation*}
    E_I \coloneq \{(v, t) \in V_I \times \mathbb G_m^r \mid \mathcal L_{V_I}(v) = \pi_I(A(t))\}.
\end{equation*}
The parameters $t_j$ with $\tau_G(s_j)\in I$ are uniquely determined by $v$ and depend regularly on it. Hence
\begin{equation*}
    E_I \xrightarrow{\sim} Z_I \times \mathbb G_m^{J(P_I)}, \qquad (v,t) \mapsto \left(v, (t_j)_{j \in J(P_I)} \right).
\end{equation*}
For $(v, t) \in E_I$, the element $vA(t)F(v)^{-1}$ belongs to $\mathrm{Ker}(\pi_I) = U_I$. Writing $u = nv$ with $n \in U_I$, we obtain
\begin{equation*}
    \mathcal L_U(u) = A(t) \quad \iff \quad \mathcal L_{U_I}(n) = vA(t)F(v)^{-1}.
\end{equation*}
Consequently, we have an isomorphism
\begin{equation*}
    E_I \times_{U_I} U_I \xrightarrow{\sim} Z_c, \qquad ((v, t), n) \mapsto nv,
\end{equation*}
where the fiber product is formed using $(v,t) \mapsto vA(t)F(v)^{-1}$ on the first factor and $\mathcal L_{U_I}$ on the second. Under this isomorphism, the projection to $E_I$ is
\begin{equation*}
    Z_c \to E_I, \qquad u \mapsto \left( \pi_I(u), (a_1(u), \ldots, a_r(u)) \right).
\end{equation*}
Since the Lang map $\mathcal L_{U_I}$ is a finite étale $U_I^F$-torsor, so is this projection. We therefore obtain an isomorphism $Z_c/U_I^F \xrightarrow{\sim} E_I$. Together with \eqref{eq:lang-locus} and the identifications above, we finally constructed an isomorphism
\begin{equation*}
    \theta_I: X/U_I^F \xrightarrow{\sim} S_{P_I} \times \mathbb G_m^{J(P_I)}.
\end{equation*}
It remains to identify the morphism on $X$ inducing $\theta_I$. Let $B' = uB^-u^{-1}$ with $u \in Z_c$, and write $u = nv$, where $n \in U_I$ and $v = \pi_I(u)$. Since $u \in P_I$ and $B^-\cap P_I = B^- \cap L_I$, we have
\begin{equation*}
    B'\cap P_I = nv(B^-\cap L_I)v^{-1}n^{-1}.
\end{equation*}
Under the identification $P_I/U_I \simeq L_I$, its image in the quotient by $U_I$ is therefore $\rho_{P_I}(B') = v(B^-\cap L_I)v^{-1}$. Consequently, the composite
\begin{equation*}
    X\to X/U_I^F \xrightarrow{\theta_I} S_{P_I} \times \mathbb G_m^{J(P_I)}
\end{equation*}
is precisely $\left(\rho_{P_I}, (a_j)_{j\in J(P_I)}\right)$. Finally, intersection with $P_I$ and passage to $P_I/U_I$ are compatible with conjugation by $P_I$. Thus $\rho_{P_I}$ is $P_I^F$-equivariant, where $P_I^F$ acts on $S_{P_I}$ through the quotient $P_I^F/U_I^F\simeq L_I^F$.
\end{proof}

While the morphism $\rho_{P_I}$ is $P_I^F$-equivariant, the functions $a_j: X \to \mathbb G_m$ for $j \in J(P_I)$ need not be so. Consequently, the isomorphism $\theta_I$ need not be $L_I^F$-equivariant when $(\mathbb G_m)^{J(P_I)}$ is given the trivial $L_I^F$-action. We next describe successive quotients for embedded standard parabolics.

\begin{lemma}\label{lem:nested-quotients}
    Let $I \subseteq K \subseteq \Delta$ be $\phi$-stable. Applying Proposition \ref{prop:open-quotient} to $P_K \subset G$ and then to $P_I \cap L_K \subset L_K$ gives isomorphisms
    \begin{align*}
    X/U_I^F & \xrightarrow{\sim} \left( S_{P_K} / (U_I \cap L_K)^F \right) \times \mathbb{G}_m^{J(P_K)}\\
    & \xrightarrow{\sim} S_{P_I} \times \mathbb{G}_m^{J(P_I) \setminus J(P_K)} \times \mathbb{G}_m^{J(P_K)}.
    \end{align*}
    After regrouping the $\mathbb G_m$-factors by their indices, the composite isomorphism is $\theta_I$.
\end{lemma}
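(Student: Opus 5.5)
The plan is to work throughout at the level of the Lang loci $Z_c$, $E_K$, $Z_K$, and so on, using the explicit description from the proof of Proposition \ref{prop:open-quotient}. Everything is then a statement about projections of unipotent groups and Lang maps.

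First, since $U_I \supseteq U_K$, the quotient $X/U_I^F$ can be formed in two steps: first by $U_K^F$, then by $U_I^F/U_K^F \simeq (U_I\cap L_K)^F$. Here I use that $U_I = U_K \rtimes (U_I\cap L_K)$ and that $U_K$ is connected, so taking $F$-fixed points preserves this decomposition. By Proposition \ref{prop:open-quotient} for $P_K$, we have $X/U_K^F \simeq S_{P_K}\times\mathbb G_m^{J(P_K)}$ via $(\rho_{P_K},(a_j)_{j\in J(P_K)})$. This map is $P_K^F$-equivariant on the first factor, acting through $L_K^F$. Moreover, the functions $a_j$ are $U^F$-invariant, hence $(U_I\cap L_K)^F$-invariant. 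So the residual group acts only on $S_{P_K}$, which gives the first isomorphism.

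Second, apply Proposition \ref{prop:open-quotient} to the reductive group $L_K$, its Coxeter variety $S_{P_K}=X_{L_K}(c'_K)$ and the standard parabolic $P_I\cap L_K$. This gives $S_{P_K}/(U_I\cap L_K)^F \simeq X_{L_I}(c'')\times \mathbb G_m^{J'}$. Here $c''$ and $J'$ are computed from $c'_K$, using the involution $\tau_K$ playing the role of $\tau_G$. The combinatorial check is the following. The letters of $c'_K$ are $\tau_K\tau_G(s_j)$ for $j$ with $\tau_G(s_j)\in K$, taken in increasing $j$. Applying $\tau_K$ once more, the retained letters are those with $\tau_G(s_j)\in I$, and they become $\tau_I\tau_K\tau_K\tau_G(s_j)=\tau_I\tau_G(s_j)$. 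Hence $c''=c'_I$ and $X_{L_I}(c'')=S_{P_I}$. The omitted indices are those $j$ with $\tau_G(s_j)\in K\setminus I$, which is exactly $J(P_I)\setminus J(P_K)$.

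Third, I must check that the composite equals $\theta_I$, i.e.\ that it is induced by $(\rho_{P_I},(a_j)_{j\in J(P_I)})$. For the $S_{P_I}$-component, write $u\in Z_c$. Then $\rho_{P_K}$ corresponds to $\pi_K(u)\in Z_K$, and the Levi-level $\rho$ corresponds to $\pi_I^{L_K}(\pi_K(u))=\pi_I(u)$. Alternatively, one can argue directly: $(B'\cap P_K)U_K/U_K$ intersected with $P_I\cap L_K$ and taken mod $U_I\cap L_K$ gives $(B'\cap P_I)U_I/U_I$, which follows from transitivity of the Levi projections $U\to V_K\to V_I$.

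For the $\mathbb G_m$-components, the new coordinates $j\in J(P_I)\setminus J(P_K)$ are the Levi functions $a^{L_K}_j$ on $Z_K$, evaluated at $\pi_K(u)$. By the proof of Proposition \ref{prop:open-quotient}, $\mathcal L_{V_K}(\pi_K(u))=\pi_K(A(t))$ with $t=(a_j(u))_j$. Thus the coefficient of $x_{\tau_G(\alpha_j)}$ is the same $a_j(u)$, and $a^{L_K}_j\circ\pi_K = a_j$.

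The main obstacle is a parametrization mismatch. The Levi-level map $A_{L_K}$ uses root subgroups $x_{\tau_K(\beta)}$ for the simple roots $\beta=\tau_K\tau_G(\alpha_j)$ of $c'_K$, so $\tau_K(\beta)=\tau_G(\alpha_j)$. The ordering is preserved, and the root parametrizations are the same fixed $x_\alpha$. So the identification is exact, not merely up to scalars. I would verify this carefully, since everything else is formal.
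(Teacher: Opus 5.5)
Your proposal is correct and follows essentially the same route as the paper: a two-step quotient via $U_I = U_K \rtimes (U_I \cap L_K)$, the combinatorial identification of the retained letters of $c'_K$ with $c'_I$ and of the omitted indices with $J(P_I)\setminus J(P_K)$, and the identification of the composite using $(\pi_I)_{|V_K}\circ\pi_K=\pi_I$ together with $\mathcal L_{V_K}(\pi_K(u))=\pi_K(\mathcal L_U(u))$. Your explicit check that the Levi-level map $A_{L_K}$ uses the same root subgroups $x_{\tau_G(\alpha_j)}$, in the same order, makes precise a point the paper leaves implicit.
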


\begin{proof}
The Levi decomposition gives
\begin{equation*}
    U_I = U_K \rtimes (U_I\cap L_K), \qquad U_I^F/U_K^F \simeq (U_I\cap L_K)^F.
\end{equation*}
Under $\theta_K$, the latter group acts naturally on $S_{P_K}$ and acts trivially on $\mathbb{G}_m^{J(P_K)}$. Taking the quotient by $U_K^F$, then by $U_I^F/U_K^F$ successively gives the first isomorphism in Lemma \ref{lem:nested-quotients}, induced by $\theta_K$.

We now apply Proposition~\ref{prop:open-quotient} inside $L_K$ to the parabolic $P_I\cap L_K$, whose standard Levi subgroup is $L_I$. A simple reflection $\tau_K\tau_G(s_j)$ of $c'_K$ is kept in this construction precisely when
\begin{equation*}
    \tau_K \bigl( \tau_K\tau_G(s_j) \bigr) = \tau_G(s_j) \in I.
\end{equation*}
The resulting Coxeter element in $L_I$ is therefore
\begin{equation*}
    \prod_{\substack{j \text{ such that}\\ \tau_G(s_j) \in I}} \tau_I\tau_K \bigl(\tau_K\tau_G(s_j)\bigr),
\end{equation*}
which is nothing but $c_I'$, the product being taken in increasing order of $j$. Thus the proposition gives an isomorphism
\begin{equation*}
    S_{P_K}/(U_I\cap L_K)^F \xrightarrow{\sim} S_{P_I} \times \mathbb G_m^{J(P_I) \setminus J(P_K)}.
\end{equation*}
For $u\in Z_c$, we have
\begin{equation*}
    \pi_K(u)^{-1}F(\pi_K(u)) = \pi_K \left( u^{-1}F(u) \right) = \prod_{\substack{j \text{ such that} \\ \tau_G(s_j) \in K}} x_{\tau_G(\alpha_j)}(a_j(u)).
\end{equation*}
In the construction of Proposition \ref{prop:open-quotient} inside $L_K$, the functions to the $\mathbb G_m$-factors correspond to the root factors killed by $(\pi_I)_{|V_K}$. In the product above, these are precisely the factors with $\tau_G(s_j) \in K \setminus I$, that is $j \in J(P_I) \setminus J(P_K)$. 

On the other hand, we have $(\pi_I)_{|V_K} \circ \pi_K = \pi_I$, from which it follows that 
\begin{equation*}
    \rho_{P_I} = \rho_{P_I \cap L_K} \circ \rho_{P_K},
\end{equation*}
where $\rho_{P_I \cap L_K}: S_{P_K} \to S_{P_I}$ is constructed as above within $L_K$. Thus, up to reordering the $\mathbb G_m$-factors, the composite isomorphism is $\theta_I$.
\end{proof}

Let $P$ be a rational parabolic of type $I$, not necessarily standard, and choose $\gamma \in G^F$ such that $P = \gamma P_I\gamma^{-1}$. Conjugation by $\gamma$ identifies $U_I$ with $U_P$ and induces an isomorphism compatible with Frobenius
\begin{equation*}
    L_I \xrightarrow{\sim} L_P, \qquad \ell \mapsto \gamma \ell \gamma^{-1}U_P.
\end{equation*}
Since $\gamma \in G^F$, conjugation by $\gamma^{-1}$ preserves $X$. Under the identification above, the morphism $B' \mapsto \rho_{P_I}(\gamma^{-1}B'\gamma)$ becomes
\begin{equation*}
    \rho_P: X \to \mathcal B(L_P), \qquad B' \mapsto (B' \cap P)U_P/U_P.
\end{equation*}
We denote its image by $S_P$. It corresponds to $S_{P_I} = X_{L_I}(c'_I)$ under the identification $L_I \simeq L_P$, and is therefore a Coxeter variety for $(L_P, F_P)$. The formula above shows that $\rho_P$ and $S_P$ are independent of the choice of $\gamma$. Moreover, $\rho_P$ is $P^F$-equivariant, with the action on $S_P$ factoring through $P^F/U_P^F \simeq L_P^{F_P}$. By Proposition \ref{prop:open-quotient}, the morphism
\begin{equation*}
    \begin{split}
        X & \to S_P \times \mathbb G_m^{J(P_I)},\\
        B' & \mapsto \left( \rho_P(B'), (a_j(\gamma^{-1}B'\gamma))_{j \in J(P_I)} \right)
    \end{split}
\end{equation*}
induces an isomorphism
\begin{equation*}
    \theta_{P}: X/U_P^F \xrightarrow{\sim} S_P \times \mathbb G_m^{J(P_I)} \simeq S_P \times \mathbb G_m^{d(P)}.
\end{equation*}
While $\rho_{P}$ is independent of $\gamma$, the functions to $\mathbb G_m^{d(P)}$ depend on it by construction. If $P = P_I$, we take $\gamma = 1$ so that $\theta_{P_I} = \theta_I$. For embedded rational parabolics $P \subseteq Q$, we have $U_Q \subseteq U_P$. The group $U_P^F/U_Q^F$ acts on $S_Q$ through its inclusion in $Q^F/U_Q^F \simeq L_Q^{F_Q}$. Choose a rational Borel contained in $P$ and an element $\gamma \in G^F$ carrying $B$ to this Borel. Under conjugation by $\gamma^{-1}$, both $P$ and $Q$ become standard, so that applying Lemma \ref{lem:nested-quotients} and transporting its constructions back gives compatible successive isomorphisms 
\begin{align*}
    X/U_P^F & \xrightarrow{\sim} \left( S_Q / (U_P^F/U_Q^F) \right) \times \mathbb G_m^{d(Q)}\\
    & \xrightarrow{\sim} S_P \times \mathbb G_m^{d(P)-d(Q)} \times \mathbb G_m^{d(Q)},
\end{align*}
whose composition is $\theta_P$ up to reordering of the $\mathbb G_m$-factors.

\subsection{The main theorem}

Our main theorem extends the quotient isomorphisms of Section \ref{subsec:lusztig} from $X$ to the open stars $C_P^*$.

\begin{theorem}\label{thm:star-quotient}
    Let $P$ be a rational parabolic subgroup of $G$. After possibly inverting some of the $\mathbb G_m$-coordinates, the isomorphism $\theta_P: X/U_P^F \simeq S_P \times \mathbb{G}_m^{d(P)}$ extends to an isomorphism
    \begin{equation*}
        C_P^* / U_P^F \xrightarrow{\sim} S_P \times \mathbb{A}^{d(P)}.
    \end{equation*}
    Under this isomorphism, the reduced images of the irreducible components of $C_P^* \setminus X$ are the products of $S_P$ with the coordinate hyperplanes of $\mathbb{A}^{d(P)}$, and the reduced image of $C_P$ is $S_P \times \{0\}$. The induced morphism $C_P^* \to S_P$ extends $\rho_P$ and is $P^F$-equivariant, where $P^F$ acts on $S_P$ through $P^F / U_P^F \simeq L_P^{F_P}$. Its restriction $C_P \to S_P$ is a finite universal homeomorphism.
\end{theorem}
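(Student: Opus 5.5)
The argument starts with the case where $P = M$ is a maximal proper rational parabolic, and reduces the general case to it by successive quotients. By $G^F$-conjugation we may assume $M = P_I$ is standard, with $|J(P_I)| = 1$, say $J(P_I) = \{j_0\}$. In this case $C_M^* = X \sqcup C_M$, and $C_M$ is a smooth irreducible divisor in the smooth variety $C_M^*$.

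\textbf{Step 1: extend the two components of $\theta_M$ across $C_M$.}
\begin{itemize}
\item \emph{The projection.} Extend $\rho_M$ to $C_M^*$ by the same formula $B' \mapsto (B_0\cap M)U_M/U_M$. On $C_M$ the first Borel $B_0$ lies in $M$, and this map is the identification of $C_M$ with a Coxeter variety of $L_M$. To check that the formula glues to a morphism on $C_M^*$, I would work on $\overline{X} \subset \mathcal B(G)$. The locus of Borels $B'$ whose relative position to the parabolic $M$ is generic (so that $B'\cap M$ is a Borel of $L_M$ modulo $U_M$) is open in $\mathcal B(G)$. It contains $X$, where $\rho_M$ is defined, and it contains $C_M$, where $B_0 \subseteq M$. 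On this locus the map is a morphism.
\item \emph{The function.} The function $a_{j_0}$ is $U_M^F$-invariant, and its divisor on $C_M^*$ is supported on $C_M$. Replacing $a_{j_0}$ by $a_{j_0}^{-1}$ if necessary (this is the allowed "inversion"), we get a regular function on $C_M^*$ vanishing exactly along $C_M$. To see which sign occurs, I would compute its valuation along $C_M$ in a local chart of the Bott–Samelson variety near a point of $C_M$, using the Lang-map parametrisation $\mathcal L_U(u) = A(t)$. As $t_{j_0}\to 0$ or $\infty$, the relative position at step $j_0$ degenerates to $1$.
\end{itemize}
Together these give a $U_M^F$-invariant morphism $\Phi: C_M^* \to S_M \times \mathbb A^1$. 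It extends $\theta_M$ and maps $C_M$ into $S_M\times\{0\}$.

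\textbf{Step 2: show that $\Phi$ induces an isomorphism $C_M^*/U_M^F \to S_M\times\mathbb A^1$.}
\begin{itemize}
\item \emph{Restriction to the boundary.} The restriction $C_M \to S_M$ is a $L_M^{F_M}$-equivariant morphism between two Coxeter varieties of $(L_M, F_M)$. These may be attached to different Coxeter elements $c_I$ and $c'_I$. By Proposition \ref{prop:dominance}, quoted in the introduction, such a map is dominant. Both varieties are irreducible of dimension $r(M)$, so the map is quasi-finite and generically finite.
\item \emph{Finiteness and bijectivity.} Both varieties are affine and carry free actions of $L_M^{F_M}$ with quotient varieties. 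I would compare $\mathbb F_{q^n}$-point counts, which are known by Lusztig's computation of Coxeter variety cohomology to agree for all $n$. Combined with equivariance and dominance, this should give that $C_M \to S_M$ is bijective on geometric points and finite. A finite, bijective, dominant morphism onto a normal variety is a universal homeomorphism.
\item \emph{Conclusion.} The induced map $\overline\Phi: C_M^*/U_M^F \to S_M\times\mathbb A^1$ is an isomorphism over $\mathbb G_m$ by Proposition \ref{prop:open-quotient}. It is bijective over $0$, since $U_M^F$ acts trivially on $C_M$. It is quasi-finite and birational, and the target is normal. To apply Zariski's Main Theorem I also need properness/finiteness of $\overline\Phi$. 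I would obtain this from the valuative criterion: a curve in $X$ whose $\theta_M$-image tends to a point of $S_M\times\{0\}$ has a limit in $\overline X$. This limit lies in $C_M^*$, because $\rho_M$ and the function stay bounded, and boundedness excludes the other boundary divisors. Zariski's Main Theorem then yields the isomorphism, and the reduced image of $C_M$ is $S_M\times\{0\}$.
\end{itemize}

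\textbf{Step 3: the general case by induction.} For general $P$, I would use the nested decomposition after Lemma \ref{lem:nested-quotients} with a maximal $Q\supsetneq P$. The quotient $C_P^*/U_Q^F$ is first handled by the analogue of the above for each maximal $M \supseteq P$. The boundary divisors through $C_P$ then become coordinate hyperplanes transverse to each other, by the simple normal crossings property. Then we take the quotient by $U_P^F/U_Q^F$, which acts only on the $S_Q$-factor. The induction applies inside $L_Q$ to the stratum of $\overline{S_Q}$. This requires identifying $(C_P^*\cap \overline{C_Q}$-directions$)$ with an open star in the compactification of $S_Q$, and that identification is the delicate bookkeeping. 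An alternative is to prove the statement directly for arbitrary $P$: build $\Phi = (\rho_P, a_j^{\pm1})_{j\in J}$ at once and run the same finiteness and Zariski's Main Theorem argument stratum by stratum. On each intersection of $k$ boundary divisors, the same dominance-plus-point-count argument applies to $C_{P'}\to S_{P'}$. I expect this direct route to be cleaner. The $P^F$-equivariance of the induced map $C_P^*\to S_P$ is inherited from that of $\rho_P$ by density of $X$.

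\textbf{Main obstacle.} The crux is the finiteness/properness of $\overline\Phi$ near $S_P\times\{0\}$, that is, that nothing escapes to the other boundary divisors, together with bijectivity of $C_P\to S_P$. Dominance (Proposition \ref{prop:dominance}) is the key input for the latter. Getting the sign of the valuation of $a_j$ along $C_M$ right, by a chart computation, is the other technical point.
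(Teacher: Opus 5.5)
\textbf{There are genuine gaps, and the first is in Step 1.}

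The formula $B'\mapsto (B'\cap M)U_M/U_M$ does not extend $\rho_M$ across $C_M$. Borels contained in $M$ form the \emph{closed} $M$-orbit of $\mathcal B(G)$, not the generic one, and the formula is not continuous there in general. Take $G=\mathrm{GL}_3$, $c=s_1s_2$ and $M$ the stabiliser of the rational plane $V_2^0=\{z=0\}$. A point of $X$ is a line $V_1=[x:y:z]$, and
$$\rho_M(V_1)=(V_1+F(V_1))\cap V_2^0=[z^{q-1}x-x^q : z^{q-1}y-y^q : 0].$$
Its limit at a point of $C_M$ is $F(V_1)$, not $V_1$. More generally, on $C_M$ your formula lands in $X_{L_M}(c_I)$, which is disjoint from $S_M=X_{L_M}(c'_I)$ when $c_I\neq c'_I$, as you yourself note in Step 2. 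This is why the theorem only claims a finite universal homeomorphism $C_P\to S_P$.

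The paper instead extends $\rho_M$ abstractly:
\begin{itemize}
\item the valuative criterion at the generic point of $C_M$, with target $\overline{S_M}$;
\item Lemma \ref{lem:no-boundary-center} for $(L_M,F_M)$, showing that this generic point lands in $S_M$;
\item normality of $C_M^*$ together with affineness of $S_M$.
\end{itemize}

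You also assume rather than prove that $\mathrm{ord}_{C_M}(a_{j_0})\neq 0$. Your heuristic is mis-indexed: along $C_M$ it is the step with $s_j\notin I$ that degenerates, not the one with $\tau_G(s_j)\notin I$. The paper argues by contradiction: if the valuation were zero, $\theta_M^{-1}$ would be a section of a separated morphism $C_M^*/U_M^F\to S_M\times\mathbb G_m$. Then $X/U_M^F$ would be closed and dense, contradicting $D\neq\emptyset$.

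\textbf{The more serious gap is Step 2, which is the heart of the theorem.}

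A dominant morphism $C_M\to S_M$ between irreducible varieties of the same dimension is only generically finite. Equal point counts plus equivariance do not make it finite or bijective. Your properness argument rests on the claim that boundedness of $\rho_M$ and $a_{j_0}^{\pm1}$ keeps limits away from the other boundary divisors. You give no argument for this, and it is essentially equivalent to the surjectivity being proved.

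The paper needs no properness. Van der Waerden purity (Lemma \ref{lem:birational-open-immersion}) makes the birational separated map $f: C_M^*/U_M^F\to S_M\times\mathbb A^1$ an open immersion once it is quasi-finite at codimension-one points. Dominance of $C_M\to S_M$ gives exactly this at the generic point of $D$. Hence $D\simeq S_0$ for some open $S_0\subseteq S_M$. Since $U_M^F$ fixes $C_M$ pointwise, $C_M\to S_0$ is a finite universal homeomorphism. By \eqref{eq:levi-counts},
$$|S_0(\mathbb F_{q^{\delta a}})|=|C_M(\mathbb F_{q^{\delta a}})|=|S_M(\mathbb F_{q^{\delta a}})|,$$
which forces $S_0=S_M$.

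For general $P$ the paper repeats this pattern:
\begin{itemize}
\item extend by normality over the codimension-one points lying in $X/U_I^F$ and $C_{M_i}^*/U_I^F$;
\item apply purity to get an open immersion;
\item count points stratum by stratum, using Proposition \ref{prop:open-quotient} on each $C_Q$ for $P_I/U_Q\subseteq L_Q$ and the identity $\sum_j\binom{d(P)}{j}(q^{\delta a}-1)^j=q^{\delta a d(P)}$.
\end{itemize}
Neither route in your Step 3 supplies this surjectivity mechanism.
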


Let us explain the statement a little bit more precisely. Denote by $q_P: C_P^* \to S_P \times \mathbb A^{d(P)}$ the composite of the quotient morphism with the isomorphism of Theorem \ref{thm:star-quotient}. For $P = P_I$, index the coordinates $t_i$ of $\mathbb A^{d(P)}$ by $J(P_I)$. Their pullbacks satisfy
\begin{equation*}
    \forall i \in J(P_I), \qquad t_i \circ (q_P)_{|X} \in \{a_i, a_i^{-1}\}.
\end{equation*}
For each $i \in J(P_I)$, let $M_i$ be the standard rational parabolic whose type is obtained from $\Delta$ by removing the $\phi$-orbit of $\tau_G(s_i)$. These are the maximal proper rational parabolics containing $P_I$. The theorem states that
\begin{equation*}
    \left( q_{P_I}(\overline{C_{M_i}} \cap C_{P_I}^*) \right)_{\mathrm{red}} = S_{P_I} \times \{t_i = 0\} \subseteq S_{P_I} \times \mathbb A^{J(P_I)}.
\end{equation*}
For an arbitrary rational parabolic $P$, this description is transported by the same conjugation used to construct $\theta_P$.

For $P = G$, the theorem gives the identity on $X$. For a rational Borel $B'$, $S_{B'}$ is a point so that $C_{B'}^*/U_{B'}^F \simeq \mathbb A^r$. 

The proof of Theorem \ref{thm:star-quotient} occupies the next Section. In it, we will use the following point counting argument. By \cite{lusztigCoxeterOrbitsEigenspaces1976a} Proposition 1.10, any two Coxeter varieties for the same pair $(G, F)$ have the same number of points over every sufficiently divisible finite extension of $\mathbb F_q$. Since $C_P$ and $S_P$ are Coxeter varieties for $(L_P, F_P)$, there is an integer $\delta \geq 1$ such that
\begin{equation}\label{eq:levi-counts}
    \forall a \geq 1, \qquad |C_P(\mathbb F_{q^{\delta a}})| = |S_P(\mathbb F_{q^{\delta a}})|.
\end{equation}

\section{Unipotent quotients of open stars}\label{sec:geometry}

\subsection{Equivariant rational maps between Coxeter varieties}

We first prove a dominance property for equivariant rational maps between Coxeter varieties.

\begin{lemma}\label{lem:no-boundary-center}
    There is no nonempty irreducible $G^F$-stable closed subset of $\overline X$ contained in $\overline X \setminus X$.
\end{lemma}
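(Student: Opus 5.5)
Let $Z \subseteq \overline X \setminus X$ be a nonempty irreducible closed $G^F$-stable subset. The plan is to show that $Z$ must meet every boundary divisor of a given type. This then forces $Z$ into a single divisor, which contradicts $G^F$-stability.

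First, the boundary $\overline X \setminus X$ is the union of the divisors $\overline{C_M}$, for $M$ ranging over the maximal proper rational parabolics. This is a finite union of closed subsets, so the irreducible $Z$ lies in some $\overline{C_M}$. Let $M$ have type $\Delta \setminus \{\text{orbit}\}$.

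Next, $Z$ is $G^F$-stable, so for every $g \in G^F$ we get
\begin{equation*}
    Z = gZ \subseteq g\overline{C_M} = \overline{C_{gMg^{-1}}}.
\end{equation*}
As recalled in Section \ref{subsec:strata}, the divisors of a fixed type are pairwise disjoint and form a single $G^F$-orbit. Hence $Z$ lies in the intersection of all divisors of that type. I need at least two distinct such divisors, and this holds because $M$ is a proper parabolic: $M \neq G$ implies $G^F/M^F$ has more than one element. This uses that rational parabolics of a given type are in bijection with $G^F/P_I^F$, and that $P_I^F \neq G^F$ for $I \neq \Delta$ (for example, $U_{-\alpha}^F \not\subseteq P_I$ for a suitable rational root subgroup, or simply the Bruhat decomposition of $G^F$). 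So the intersection of two disjoint divisors is empty, and therefore $Z = \emptyset$, which is a contradiction.

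The only step that needs any care is the existence of two distinct parabolics of the given type, that is, $G^F \neq M^F$. I would take this from the rational Bruhat decomposition $G^F = \bigsqcup_{w \in W^\phi} B^F w B^F$. The element $w_0$ is $\phi$-fixed, and $w_0 \notin W_I$ when $I \neq \Delta$, so it gives an element of $G^F$ outside $P_I^F$. Everything else follows directly from the disjointness statement already recalled in the paper.
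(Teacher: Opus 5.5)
Your proposal is correct and is the paper's own argument: irreducibility puts $Z$ inside one boundary divisor $\overline{C_M}$, $G^F$-stability then puts it inside every $G^F$-translate, and two distinct translates have the same type and are disjoint. You also justify that there are at least two translates, using an $F$-stable representative of $w_0 \notin W_I$ to get $M^F \neq G^F$ together with $N_{G^F}(M) = M^F$, which fills in a point the paper only asserts.
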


\begin{proof}
    Such a subset would be contained in one boundary divisor $\overline{C_M}$ by irreducibility, and hence in every $G^F$-translate of that divisor. There are at least two such translates, and distinct ones have the same type and are disjoint, hence a contradiction.
\end{proof}

When considering different Coxeter elements for the same pair $(G, F)$, we will use explicit notations such as $\overline X_G(c)$ and $C_P(c)$ instead of $\overline X$ and $C_P$.

\begin{proposition}\label{prop:dominance}
    Let $c$ and $c'$ be Coxeter elements for $(G, F)$. Every $G^F$-equivariant rational map
    \begin{equation*}
        X_G(c) \dashrightarrow X_G(c')
    \end{equation*}
    is dominant.
\end{proposition}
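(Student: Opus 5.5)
Let $f: X_G(c) \dashrightarrow X_G(c')$ be a $G^F$-equivariant rational map. Since $X_G(c)$ is irreducible, the closure $Z \coloneq \overline{f(\mathrm{dom}(f))}$ of the image, taken inside the compactification $\overline{X}_G(c')$, is an irreducible closed subset. The domain of definition of $f$ is $G^F$-stable because $f$ is equivariant, so $Z$ is $G^F$-stable as well. We will show that $Z = \overline{X}_G(c')$, which gives dominance onto $X_G(c')$ because $X_G(c')$ is open and dense in its compactification.

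First, $Z$ meets $X_G(c')$, since the image of $f$ lies in $X_G(c')$ by definition. Lemma \ref{lem:no-boundary-center} applies to any Coxeter element, in particular to $c'$, but we do not even need it here. What must be excluded is a proper $G^F$-stable irreducible closed subset of $\overline X_G(c')$ that meets the open part. The plan is to use the structure of $X_G(c')$ as a $G^F$-variety. The quotient $X_G(c')/G^F$ is irreducible of dimension $r$. Its points correspond to $G^F$-orbits, and each orbit is finite, of size at most $|G^F|$.

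Second, we compare dimensions. Since $G^F$ is finite, $\dim Z = \dim (Z\cap X_G(c'))$. We therefore want to show that $\dim Z = r$. The image of $f$ is $\mathrm{dom}(f)/\!\sim$, and it is bounded below through the orbit counts. The action of $G^F$ on $X_G(c)$ is free modulo the center: stabilizers of points are contained in $Z(G)^F$ by Lusztig's results, using that the $U^F$-action on $Z_c$ is free, and the same holds for conjugates. For a generic point $x$, the fiber $f^{-1}(f(x))$ is stable under $\mathrm{Stab}(f(x))$. The key observation to aim for is that a positive-dimensional fiber cannot be excluded by the group action alone. So I would instead argue via point counting or cohomology.

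The alternative, which I expect to actually work, is to argue by contradiction. Suppose $\dim Z < r$. Take a $G^F$-stable affine open $V \subseteq X_G(c')$ with $V\cap Z \neq \emptyset$; such a $V$ exists since $X_G(c')$ is affine. Consider the morphism $f: \mathrm{dom}(f) \to Z\cap X_G(c')$. The generic fiber is then positive dimensional, and it is stable under the generic stabilizer. Because the $U^F$-action on $Z_c$ is free, the stabilizer of a point of $X_G(c')$ is contained in the center, hence is trivial modulo $Z(G)^F$, which acts trivially.

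The main obstacle is producing the contradiction. My first attempt would be through the quotient $X_G(c')/U^F$. By Proposition \ref{prop:open-quotient} with $P_I = B$, it is isomorphic to $\mathbb G_m^r$, and the remaining $T^F$-action is explicit. The image of $Z$ in $\mathbb G_m^r$ would be a proper closed subvariety. It must be stable under the finite group and, more importantly, under the compatible induced structure coming from all rational Borels. Using the quotients for all rational parabolics, and Lemma \ref{lem:no-boundary-center} applied to the closure of $Z$ in $\overline X_G(c')$, I would try to show that the closure of $Z$ has a boundary component inside some $\overline{C_M}$ that is $G^F$-stable. That would contradict Lemma \ref{lem:no-boundary-center}.
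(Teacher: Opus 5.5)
Your proposal does not reach a proof, and the route it sets up cannot succeed. You reduce the problem to excluding a proper $G^F$-stable irreducible closed subset of $\overline X_G(c')$ that meets $X_G(c')$, using only that $Z$ is irreducible and $G^F$-stable. Such subsets exist in general. Take $G=\mathrm{GL}_3$, where $X$ is the complement in $\mathbb P^2$ of the $\mathbb F_q$-rational lines. Pull back a very ample linear system from the surface $\mathbb P^2/G^F$. A general member of the pullback is $G^F$-stable, is irreducible by Bertini's theorem (the associated morphism has two-dimensional image), and meets $X$.

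So no argument that uses only $G^F$-stability and irreducibility of $Z$ can work. In particular, your final step has no mechanism behind it: you hope to produce a $G^F$-stable boundary component of $Z$ contradicting Lemma \ref{lem:no-boundary-center}, but $G^F$ merely permutes the components of $Z \cap (\overline X_G(c') \setminus X_G(c'))$. Separately, your claim that stabilizers in $G^F$ are central is false. For $\mathrm{SL}_2$, the points of $\mathbb P^1\setminus\mathbb P^1(\mathbb F_q)$ defined over $\mathbb F_{q^2}$ are fixed by a nonsplit torus of order $q+1$.

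The missing idea is to exploit the boundary of the \emph{source}, by induction on $r$. Since $\overline X_G(c)$ is smooth and $\overline X_G(c')$ is proper, $f$ extends over the discrete valuation ring at the generic point of each boundary divisor $\overline{C_M(c)}$. Intersecting finitely many translates makes the extension $M^F$-equivariant on a neighbourhood. As $U_M^F$ acts trivially on $\overline{C_M(c)}$, the restricted map lands in the fixed locus, which is $\overline{C_M(c')}$ by \eqref{eq:fixed-locus}. Lemma \ref{lem:no-boundary-center} for $(L_M,F_M)$ then shows that its image meets $C_M(c')$. Induction on the rank, applied to the resulting $L_M^{F_M}$-equivariant map $C_M(c)\dashrightarrow C_M(c')$, shows this map is dominant. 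Hence $Z\supseteq\overline{C_M(c')}$ for every maximal proper rational parabolic $M$. If $\dim Z\le r-1$, the irreducible set $Z$ would equal two distinct, disjoint divisors of the same type, which is a contradiction.

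This is the paper's argument. It works because it uses that the source is itself a Coxeter variety whose boundary strata are compatible with the parabolic subgroups, not merely that the image is $G^F$-stable.
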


\begin{proof}
We argue by induction on $r = |\Delta/\phi|$, simultaneously for all connected reductive groups endowed with an $\mathbb F_q$-structure. For $r = 0$, both varieties are points so that there is nothing to do.

Suppose that $r > 0$, and let $f$ be such a rational map. Fix a maximal proper rational parabolic $M$, and let $\eta_M$ be the generic point of $\overline{C_M(c)}$. Since $\overline X_G(c)$ is smooth, its local ring at $\eta_M$ is a discrete valuation ring. Properness of $\overline X_G(c')$ therefore extends $f$ over this local ring via the valuative criterion, and hence over an open neighbourhood of $\eta_M$. Intersecting its finitely many $M^F$-translates gives an $M^F$-stable neighbourhood $V$ of $\eta_M$. Denote the extended morphism on $V$ by $\widetilde f$. For every $h \in M^F$, the two morphisms $\widetilde f \circ h$ and $h \circ \widetilde f$ from $V$ to $\overline{X}_G(c')$ agree on the open dense subset $V \cap \Omega$, where $\Omega$ is an $M^F$-stable domain of definition for $f$. Since $V$ is integral and $\overline X_G(c')$ is separated, they agree on all of $V$. Thus $\widetilde f$ is $M^F$-equivariant. Restriction to $\overline{C_M(c)}$ then gives an $M^F$-equivariant rational map
\begin{equation*}
    f_M: \overline{C_M(c)} \dashrightarrow\overline X_G(c').
\end{equation*}
Since $U_M^F$ acts trivially on the source, its image lies in 
\begin{equation*}
    \bigl(\overline X_G(c')^{U_M^F}\bigr)_{\mathrm{red}} = \overline{C_M(c')}
\end{equation*}
by \eqref{eq:fixed-locus}. The closure of this image is irreducible and $L_M^{F_M}$-stable. Applying Lemma \ref{lem:no-boundary-center} to $(L_M, F_M)$ shows that it must meet $C_M(c')$. Thus $f_M$ induces an $L_M^{F_M}$-equivariant rational map
\begin{equation*}
    C_M(c) \dashrightarrow C_M(c').
\end{equation*}
Both are Coxeter varieties for $(L_M, F_M)$ whose relative semisimple rank is $r-1$. This map is therefore dominant by induction. 

Let $W$ be the closure of the image of $f$ in $\overline X_G(c')$. The extension $\widetilde f$ near $\eta_M$ also factors through $W$, since $W$ is closed and contains the image of $\Omega$ by $f$. Consequently, $\overline{C_M(c')} \subseteq W$ for every maximal proper rational parabolic $M$. If $f$ were not dominant, then $\dim W \leq r-1$. Since $W$ is irreducible, it would equal each of these divisors, forcing them all to coincide. This contradicts the existence of distinct boundary divisors of the same type.
\end{proof}

\subsection{The case of a maximal rational parabolic}\label{subsec:maximal}

Fix a maximal proper rational parabolic $M$, so that $C_M^* = X \sqcup C_M$. Since $\overline X$ is projective, the finite quotient $\overline X/U_M^F$ exists. The quotient $C_M^*/U_M^F$ is the corresponding open subvariety, and is normal. Let $D$ be the reduced image of $C_M$ in $C_M^*/U_M^F$. The morphism $C_M \to D$ is a finite universal homeomorphism. Indeed, its geometric fibres are $U_M^F$-orbits, and $U_M^F$ fixes $C_M$ pointwise.

Let $t: X \to \mathbb G_m$ be the function for which the composition
\begin{equation*}
    X \to X/U_M^F \xrightarrow{\theta_M} S_M \times \mathbb G_m
\end{equation*}
is $(\rho_M, t)$.

\begin{lemma}\label{lem:levi-extension}
    The morphism $\rho_M: X \to S_M$ extends uniquely to an $M^F$-equivariant morphism $C_M^* \to S_M$, where $M^F$ acts on the target through $M^F/U_M^F \simeq L_M^{F_M}$. Its restriction $C_M \to S_M$ is $L_M^{F_M}$-equivariant and dominant.
\end{lemma}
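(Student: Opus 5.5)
We first extend $\rho_M$ across $C_M$, then use equivariance to push the boundary stratum back into a Coxeter variety, and finally get dominance from Proposition \ref{prop:dominance}.

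\textbf{Extension.} The plan mirrors the extension step in the proof of Proposition \ref{prop:dominance}, but we need an honest morphism on all of $C_M^*$, not just near the generic point $\eta_M$ of $C_M$. The key is to work on the quotient. By Proposition \ref{prop:open-quotient}, $\rho_M$ factors through $X/U_M^F \simeq S_M \times \mathbb G_m$ as the first projection. The variety $S_M$ is affine, being a Coxeter variety. So it suffices to show that every regular function on $S_M$, pulled back to $X$, extends to a regular function on $C_M^*$.

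Since $C_M^*$ is smooth and $C_M^*\setminus X = C_M$ is an irreducible divisor, a function $g\circ\rho_M$ extends if and only if it has nonnegative valuation along $\eta_M$. To check this, I would first extend $\rho_M$ to a rational map $C_M^* \dashrightarrow \overline{S_M}$, where $\overline{S_M}$ is the Bott--Samelson--Demazure--Hansen compactification of $S_M$ for $L_M$. By the valuative criterion this map is defined near $\eta_M$, and as in the proof of Proposition \ref{prop:dominance} it is $M^F$-equivariant there. Restricting to $C_M$ gives an $L_M^{F_M}$-equivariant rational map $C_M \dashrightarrow \overline{S_M}$.

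The closure of its image is irreducible and $L_M^{F_M}$-stable. By Lemma \ref{lem:no-boundary-center} applied to $(L_M,F_M)$, this closure meets $S_M$. Hence $\eta_M$ maps into $S_M$, and the functions $g\circ\rho_M$ are regular at $\eta_M$. By normality (Hartogs), they are regular on all of $C_M^*$, which gives the extension $C_M^*\to S_M$.

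\textbf{Uniqueness and equivariance.} Uniqueness follows from the density of $X$ and the separatedness of $S_M$. $M^F$-equivariance follows because the two morphisms $\tilde\rho\circ h$ and $h\circ\tilde\rho$ agree on the dense open $X$. $U_M^F$ fixes $C_M$ pointwise by \eqref{eq:fixed-locus}, so the restriction to $C_M$ is $L_M^{F_M}$-equivariant.

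\textbf{Dominance.} Identify $C_M$ with a Coxeter variety for $(L_M,F_M)$ via $B_0\mapsto B_0/U_M$. The restriction $C_M\to S_M$ is then an $L_M^{F_M}$-equivariant morphism between two Coxeter varieties for the same pair, so Proposition \ref{prop:dominance} applied to $(L_M,F_M)$ gives dominance.

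\textbf{Main obstacle.} The delicate step is showing that the generic point of $C_M$ lands in $S_M$ rather than in the boundary of $\overline{S_M}$. The plan rests entirely on Lemma \ref{lem:no-boundary-center}, together with the fact that the restricted rational map is $L_M^{F_M}$-equivariant.
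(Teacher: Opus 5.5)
Your proposal is correct and follows essentially the same route as the paper. You extend $\rho_M$ near $\eta_M$ into $\overline{S_M}$ by the valuative criterion, use Lemma \ref{lem:no-boundary-center} for $(L_M,F_M)$ to see that $\eta_M$ lands in $S_M$, conclude by normality of $C_M^*$ and affineness of $S_M$, and obtain dominance from Proposition \ref{prop:dominance}; your opening remark about working on the quotient $X/U_M^F$ is never actually used and can be dropped.
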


\begin{proof}
Let $\eta_M$ be the generic point of $C_M$. The local ring $\mathcal O_{C_M^*, \eta_M}$ is a discrete valuation ring. As in the proof of Proposition \ref{prop:dominance}, properness of $\overline{S_M}$ extends $\rho_M$ to a $M^F$-equivariant morphism from a $M^F$-stable open neighbourhood of $\eta_M$ to $\overline{S_M}$.

The closure of the image of $\eta_M$ is irreducible and stable under $L_M^{F_M}$. By Lemma \ref{lem:no-boundary-center} applied to $(L_M, F_M)$, this closure is not contained in $\overline{S_M} \setminus S_M$. Its generic point, which is the image of $\eta_M$, therefore belongs to $S_M$. Consequently, for every regular function $h \in \overline{\mathbb F_q}[S_M]$, the pullback $\rho_M^*h \in \overline{\mathbb F_q}(X) = \overline{\mathbb F_q}(C_M^*)$ is regular on $X$ and at $\eta_M$. Since $C_M^* \setminus X = C_M$, $\eta_M$ is the only point of codimension one of $C_M^*$ outside $X$. Therefore normality of $C_M^*$ implies that $\rho_M^*h$ is regular on all of $C_M^*$. Since $S_M$ is affine, the resulting homomorphism $\overline{\mathbb F_q}[S_M] \to \Gamma(C_M^*, \mathcal O_{C_M^*})$ defines the required extension.

Uniqueness and equivariance of this extension follow by density and separatedness of $S_M$. The action of $M^F$ on $C_M$ factors through $L_M^{F_M}$. Applying Proposition \ref{prop:dominance} to the Coxeter varieties $C_M$ and $S_M$ for $(L_M, F_M)$ proves dominance of the restriction to $C_M$.
\end{proof}

We recall the following corollary of van der Waerden's purity theorem, see \cite{grothendieckElementsGeometrieAlgebriquec} Corollaire 21.12.13.

\begin{lemma}\label{lem:birational-open-immersion}
    Let $f: Y \to Z$ be a birational morphism locally of finite type between integral locally Noetherian schemes, with $Z$ locally factorial. If $f$ is quasi-finite at every point of codimension one of $Y$, then $f$ is a local isomorphism. If $f$ is also separated, then it is an open immersion.
\end{lemma}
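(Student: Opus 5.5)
The plan is to reduce, via the Zariski main theorem, to the case of points of codimension one, and then use van der Waerden's purity theorem (EGA IV, 21.12.12) to spread this over all of $Y$. That purity theorem says: for $f$ birational and locally of finite type between integral locally Noetherian schemes, with $Z$ locally factorial, the exceptional set
\begin{equation*}
E \coloneq \{y \in Y \mid f \text{ is not a local isomorphism at } y\}
\end{equation*}
is closed in $Y$, and each of its irreducible components has codimension one. So it suffices to show that $E$ contains no point of codimension one of $Y$. Then $E$ has no component of codimension one, so $E = \emptyset$, and $f$ is a local isomorphism.

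\textbf{Step 1: codimension-one points.} Let $y \in Y$ have codimension one and put $z = f(y)$. By hypothesis $f$ is quasi-finite at $y$. Working locally on $Z$ and $Y$, we may assume both are affine. By the Zariski main theorem, a neighbourhood of $y$ is isomorphic to an open subscheme of a scheme $Y'$ finite over $Z$. Replacing $Y'$ by the closure of $Y$ in it, we may take $Y'$ integral, finite and birational over $Z$. Since $Z$ is locally factorial, it is normal, so $\mathcal O_{Z,z}$ is integrally closed in the common function field. Hence $Y' \to Z$ is an isomorphism over a neighbourhood of $z$: a finite birational extension of an integrally closed domain inside its own fraction field is trivial. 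It follows that $f$ is a local isomorphism at $y$, that is, $y \notin E$. Applying purity as above then shows that $f$ is a local isomorphism everywhere.

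\textbf{Step 2: open immersion when $f$ is separated.} Every point of $Y$ has an open neighbourhood mapped isomorphically onto an open subset of $Z$, so $f$ is open. It remains to show $f$ is injective. Suppose $f(y_1) = f(y_2)$, and choose neighbourhoods $U_1, U_2$ mapping isomorphically onto opens $V_1, V_2 \subseteq Z$. The two sections $s_i: V_1 \cap V_2 \to Y$, obtained by inverting $f|_{U_i}$, agree on a dense open subset, since $f$ is birational and $Y$ is integral. Because $f$ is separated and $V_1 \cap V_2$ is reduced, the sections $s_1$ and $s_2$ coincide. Evaluating at $f(y_1)$ gives $y_1 = y_2$. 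An injective local isomorphism is an open immersion.

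\textbf{Main obstacle.} The substantive input is the purity theorem itself; local factoriality is used exactly there, and I would cite it rather than reprove it. Among the remaining steps, the only delicate point is the Zariski main theorem reduction in Step 1, in particular checking that passing to the closure of $Y$ in $Y'$ keeps the scheme integral and birational over $Z$.
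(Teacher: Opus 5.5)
Your proposal is correct and follows the paper's route. The paper does not prove the lemma itself; it cites EGA IV Corollaire 21.12.13, which is deduced from van der Waerden's purity theorem (21.12.12) just as you do: Zariski's Main Theorem and normality of $Z$ show that $f$ is a local isomorphism at each codimension-one point, so the exceptional locus, which purity says is of codimension one, is empty, and your separatedness argument then upgrades the local isomorphism to an open immersion.
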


\begin{proposition}\label{prop:maximal-product}
    After possibly inverting the $\mathbb G_m$-coordinate, the isomorphism $\theta_M$ extends to an isomorphism of pairs
    \begin{equation*}
        \left( C_M^*/U_M^F, D \right) \xrightarrow{\sim} \left( S_M \times \mathbb A^1, S_M \times \{0\} \right).
    \end{equation*}
\end{proposition}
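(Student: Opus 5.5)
We construct a morphism $\Phi: C_M^*/U_M^F \to S_M \times \mathbb A^1$ extending $\theta_M$ and then show it is an isomorphism using Lemma \ref{lem:birational-open-immersion} and a point count.

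\textbf{Step 1: extending the coordinate.} By Lemma \ref{lem:levi-extension}, $\rho_M$ already extends to $C_M^*$, and it is $U_M^F$-invariant, so it descends to the normal quotient $Y \coloneq C_M^*/U_M^F$. For the second coordinate, $t$ is a $U_M^F$-invariant rational function on $Y$. The only codimension-one point of $Y$ outside $X/U_M^F$ is the generic point $\xi$ of $D$, so $\mathcal O_{Y,\xi}$ is a DVR and $v_\xi(t)$ is an integer. This valuation is nonzero. Indeed, if $t$ were a unit at $\xi$, then $(\rho_M,t)$ would extend to a neighbourhood of $\xi$ with values in $S_M\times\mathbb G_m$. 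By Proposition \ref{prop:open-quotient}, the map $X/U_M^F\to S_M\times\mathbb G_m$ is already an isomorphism, hence surjective onto the separated target. The extended map would then be a birational morphism sending the divisor $D$ into a variety already covered isomorphically by $X/U_M^F$, and this contradicts separatedness: the graph of the inverse would force $D\subset X/U_M^F$. Replacing $t$ by $t^{-1}$ if necessary, we get $v_\xi(t)=e\ge 1$. By normality, $t$ is then regular on $Y$ and vanishes exactly on $D$. This gives $\Phi=(\rho_M,t): Y\to S_M\times\mathbb A^1$, with $\Phi^{-1}(S_M\times\{0\})_{\mathrm{red}}=D$. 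The map $\Phi$ is birational, since it restricts to $\theta_M$ on the dense open $X/U_M^F$.

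\textbf{Step 2: open immersion.} The target $S_M\times\mathbb A^1$ is smooth, hence locally factorial. To apply Lemma \ref{lem:birational-open-immersion}, we need $\Phi$ to be quasi-finite at $\xi$. The restriction $\Phi|_D: D\to S_M\times\{0\}$ is identified with $C_M\to S_M$ composed with the universal homeomorphism $C_M\to D$, and it is dominant by Lemma \ref{lem:levi-extension}. Since $\dim D=\dim S_M=r-1$, the generic fibre of $\Phi|_D$ is finite, so $\Phi$ does not contract $D$. Hence $\Phi$ is quasi-finite at $\xi$, and it is an isomorphism over $X/U_M^F$. The lemma shows that $\Phi$ is an open immersion; separatedness holds because $Y$ is quasi-projective.

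\textbf{Step 3: surjectivity.} This is the step I expect to be the main obstacle. Since $\Phi$ is an open immersion and is an isomorphism onto $S_M\times\mathbb G_m$ away from $D$, it remains to show that $\Phi(D)=S_M\times\{0\}$. The map $D\to S_M$ is an open immersion, so it suffices to show it is bijective on $\mathbb F_{q^{\delta a}}$-points for a suitable rational structure. The plan is to count points. First, $C_M\to D$ is a universal homeomorphism defined over $\mathbb F_q$ after passing to an extension on which everything is rational, so it is bijective on rational points. Second, \eqref{eq:levi-counts} gives $|C_M(\mathbb F_{q^{\delta a}})|=|S_M(\mathbb F_{q^{\delta a}})|$ for all $a$. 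An open immersion $D\hookrightarrow S_M$ with the same point counts over infinitely many extensions has a complement with no rational points over any of them, and a nonempty variety has points over all sufficiently large finite fields, so the complement is empty.

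The care needed in this step is in checking that $\Phi$, the quotient and the identifications are all defined over a common finite field $\mathbb F_{q^{\delta}}$, for instance a power of $F$ stabilizing $M$ and commuting with the $U_M^F$-action. Once this holds, $\Phi$ is a surjective open immersion, hence an isomorphism. Reducedness of $D$ and $\Phi(D)=S_M\times\{0\}$ give the statement about pairs.
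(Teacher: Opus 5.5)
Your proposal is correct and follows the paper's proof essentially step by step. It shows that the valuation of $t$ along $D$ is nonzero by a separatedness argument (the paper phrases this as the inverse of $\theta_M$ becoming a section, hence a closed immersion, of the extended map). It then gets quasi-finiteness at the generic point of $D$ from the dominance in Lemma \ref{lem:levi-extension}, applies Lemma \ref{lem:birational-open-immersion}, and proves surjectivity by comparing point counts through the universal homeomorphism $C_M \to D$ and \eqref{eq:levi-counts}.

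The only slip is cosmetic. A nonempty variety over $\mathbb F_{q^\delta}$ need not have points over all large extensions, only over those whose degree is divisible by that of some closed point. Since your count equality holds for every $a \geq 1$, the conclusion is unaffected.
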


\begin{proof}
Write $\mathrm{ord}_{C_M}$ for the valuation of $\overline{\mathbb F_q}(X) = \overline{\mathbb F_q}(C_M^*)$ defined by the divisor $C_M$. We first show that $\mathrm{ord}_{C_M}(t) \not = 0$. Otherwise, both $t$ and $t^{-1}$ would be regular on $X$ and at the generic point of $C_M$. Normality would extend them to $C_M^*$, where they remain inverse of each other. Together with Lemma \ref{lem:levi-extension}, this would give a $U_M^F$-invariant morphism descending to
\begin{equation*}
    C_M^*/U_M^F \to S_M \times \mathbb G_m
\end{equation*}
upon taking the quotient. Its restriction to $X/U_M^F$ is $\theta_M$, whose inverse is therefore a section. A section of a separated morphism is a closed immersion. Thus $X/U_M^F$ would be both closed and dense in $C_M^*/U_M^F$, contradicting the fact that $D \not = \emptyset$.

We may therefore replace $t$ by $t^{-1}$ if necessary, and assume that $\mathrm{ord}_{C_M}(t) > 0$. Normality extends $t$ to $C_M^*$, and this extension vanishes on $C_M$. Together with the extension of $\rho_M$, this gives a $U_M^F$-invariant morphism descending to
\begin{equation*}
    f: C_M^*/U_M^F \to S_M \times \mathbb A^1
\end{equation*}
upon taking the quotient. It is an isomorphism over $S_M \times \mathbb G_m$ and it maps $D$ dominantly to $S_M \times \{0\}$ by Lemma \ref{lem:levi-extension}. Since $\dim D = \dim S_M$, this restriction to $D$ is generically finite. Hence $f$ is quasi-finite at the generic point of $D$. It is also quasi-finite at every other point of codimension one, which all lie in $X/U_M^F$. The morphism $f$ is birational and separated, and its target is smooth hence locally factorial. Therefore, Lemma \ref{lem:birational-open-immersion} shows that $f$ is an open immersion.

Since $f$ is an open immersion and $f^{-1}(S_M \times \mathbb G_m) = X/U_M^F$, its restriction identifies $D$ with an open subvariety of $S_M \times \{0\} \simeq S_M$. Write this subvariety as $S_0 \subseteq S_M$. We therefore obtain an isomorphism $D \xrightarrow{\sim} S_0 \times \{0\} \simeq S_0$. Composing with the finite universal homeomorphism $C_M \to D$ gives a finite universal homeomorphism $C_M \to S_0$.
 
Choose $\delta \geq 1$ sufficiently divisible so that the diagram $C_M \to S_0 \hookrightarrow S_M$ is defined over $\mathbb F_{q^{\delta}}$, and such that the point counting \eqref{eq:levi-counts} holds. A finite universal homeomorphism induces a bijection on points over every finite field, so
\begin{equation*}
    \forall a \geq 1, \qquad |S_0(\mathbb F_{q^{\delta a}})| = |C_M(\mathbb F_{q^{\delta a}})| = |S_M(\mathbb F_{q^{\delta a}})|.
\end{equation*}
The complement $S_M \setminus S_0$ has no point over any finite extension of $\mathbb F_{q^\delta}$, and is therefore empty. Thus $f$ is an isomorphism, identifying $D$ with $S_M \times \{0\}$.
\end{proof}

\subsection{The case of an arbitrary rational parabolic}

\begin{proof}[Proof of Theorem \ref{thm:star-quotient}]
The case $P = G$ is immediate. For proper $P$, use the conjugation defining $\theta_P$ to reduce to the case where $P = P_I$ is standard. We use the notations developed in Section \ref{subsec:lusztig}. For each $i \in J(P_I)$, let $M_i$ be the standard rational parabolic whose type is obtained from $\Delta$ by removing the $\phi$-orbit of $\tau_G(s_i)$. These are the maximal proper rational parabolics containing $P_I$. Proposition \ref{prop:maximal-product} gives, for every $i$, a choice $b_i \in \{a_i, a_i^{-1}\}$ and an isomorphism
\begin{equation*}
    C_{M_i}^*/U_{M_i}^F \xrightarrow{\sim} S_{M_i} \times \mathbb A^1
\end{equation*}
whose restriction to $X/U_{M_i}^F$ is induced by $(\rho_{M_i}, b_i): X \to S_{M_i} \times \mathbb G_m$.

We will next take the quotient by $U_I^F/U_{M_i}^F$. Since $U_I^F \subseteq U^F$, its action fixes the function $b_i$ on the dense open subset $X/U_{M_i}^F$, and hence fixes the affine coordinate on the extension. The first projection is also equivariant by Lemma \ref{lem:levi-extension}. Thus $U_I^F/U_{M_i}^F$ acts naturally on $S_{M_i}$ and trivially on $\mathbb A^1$. Applying Lemma \ref{lem:nested-quotients}, we obtain
\begin{equation}\label{eq:star-codimension-one}
    \begin{split}
        C_{M_i}^*/U_I^F & \xrightarrow{\sim} \left( S_{M_i}/(U_I^F/U_{M_i}^F) \right) \times \mathbb A^1\\
        & \xrightarrow{\sim} S_{P_I} \times \mathbb G_m^{J(P_I)\setminus\{i\}} \times \mathbb A^1.
    \end{split}
\end{equation}
In the second isomorphism, we use the $\mathbb G_m$-coordinates $b_j \in \{a_j, a_j^{-1}\}$ chosen above, for $j \in J(P_I) \setminus \{i\}$. The compatibility in Lemma \ref{lem:nested-quotients} shows that the restriction to $X/U_I^F$ is induced by
\begin{equation*}
    \left( \rho_{P_I}, (b_j)_{j \in J(P_I) \setminus \{i\}}, b_i \right): X \to S_{P_I} \times \mathbb G_m^{J(P_I) \setminus \{i\}} \times \mathbb G_m.
\end{equation*}
For each $i \in J(P_I)$, the open subset $C_{M_i}^*/U_I^F$ contains $X/U_I^F$ and the generic point of the reduced image of $\overline{C_{M_i}} \cap C_{P_I}^*$. Their union therefore contains every point of codimension one of $C_{P_I}^*/U_I^F$. After the natural inclusions
\begin{equation}\label{eq:open-immersion-f}
    S_{P_I} \times \mathbb G_m^{J(P_I) \setminus \{i\}} \times \mathbb A^1 \hookrightarrow S_{P_I} \times \mathbb A^{J(P_I)},
\end{equation}
the morphisms in \eqref{eq:star-codimension-one} agree on $X/U_I^F$. Consequently, the pullback of every regular function on $S_{P_I} \times \mathbb A^{J(P_I)}$ is regular at every point of codimension one of the quotient. Normality of $C_{P_I}^*/U_I^F$ and affineness of the target space give a morphism
\begin{equation*}
    f: C_{P_I}^*/U_I^F \to S_{P_I} \times \mathbb A^{J(P_I)}
\end{equation*}
extending the isomorphism on $X/U_I^F$ induced by $(\rho_{P_I}, (b_j)_{j\in J(P_I)})$.

On each $C_{M_i}^*/U_I^F$, the morphism $f$ is the isomorphism \eqref{eq:star-codimension-one} followed by the open immersion \eqref{eq:open-immersion-f}. Hence $f$ is quasi-finite at every point of codimension one. It is birational and separated, with smooth target, so Lemma \ref{lem:birational-open-immersion} shows that $f$ is an open immersion.

We prove surjectivity again by counting points. For each rational parabolic $Q$ containing $P_I$, let $D_Q$ be the reduced image of $C_Q$ in $C_{P_I}^*/U_I^F$. The stratum $C_Q$ is $U_I^F$-stable and closed in the stable open subset $C_Q^* \subseteq C_{P_I}^*$. Thus $D_Q$ is locally closed, and $C_Q$ is its reduced inverse image. In particular, $C_Q \to D_Q$ is finite. The induced morphism $C_Q/U_I^F \to D_Q$ is also finite. Indeed, over an affine open $\mathrm{Spec}(A) \subseteq D_Q$ with inverse image $\mathrm{Spec}(B) \subseteq C_Q$, the inclusions $A \subseteq B^{U_I^F} \subseteq B$ show that $B^{U_I^F}$ is a finite $A$-module.

 The geometric fibers of the quotient morphism $C_Q \to C_Q/U_I^F$ and of the morphism $C_Q \to D_Q$ are the $U_I^F$-orbits in both cases. Therefore the induced finite morphism $C_Q/U_I^F \to D_Q$ is surjective and universally injective, hence a universal homeomorphism.

Since $U_Q^F$ acts trivially on $C_Q$, the action of $U_I^F$ on this stratum factors through $U_I^F/U_Q^F \simeq (U_I/U_Q)^{F_Q}$. The group $U_I/U_Q$ is the unipotent radical of the rational parabolic $P_I/U_Q \subseteq L_Q$, whose Levi quotient identifies canonically with $L_{P_I}$ as follows
\begin{equation*}
    (P_I/U_Q) / (U_I/U_Q) \xrightarrow{\sim} P_I/U_I = L_{P_I}.
\end{equation*}
Applying Proposition \ref{prop:open-quotient} to the Coxeter variety $C_Q$ for $(L_Q, F_Q)$ and the parabolic $P_I/U_Q$ therefore gives an isomorphism
\begin{equation*}
    C_Q/U_I^F \xrightarrow{\sim} S_Q^{L_{P_I}} \times \mathbb G_m^{r(Q)-r(P_I)},
\end{equation*}
where $S_Q^{L_{P_I}}$ denotes the Coxeter variety for $(L_{P_I}, F_{P_I})$ obtained by this construction. Its defining Coxeter element need not coincide with that of $S_{P_I}$. 

Choose $\delta \geq 1$ sufficiently divisible such that $f$ and all the preceding strata and morphisms are defined over $\mathbb F_{q^\delta}$, and such that the point counting \eqref{eq:levi-counts} holds for all the Coxeter varieties for $(L_{P_I}, F_{P_I})$ occurring here. Since finite universal homeomorphisms induce bijections on points over every finite field, we obtain
\begin{equation*}
    \forall a \geq 1, \quad |D_Q(\mathbb F_{q^{\delta a}})| = |S_Q^{L_{P_I}}(\mathbb F_{q^{\delta a}})| (q^{\delta a}-1)^{r(Q)-r(P_I)} = |S_{P_I}(\mathbb F_{q^{\delta a}})| (q^{\delta a}-1)^{r(Q)-r(P_I)}.
\end{equation*}
The types of the rational parabolics containing $P_I$ are obtained by adjoining some of the $d(P_I)$ remaining $\phi$-orbits to $I$. Consequently,
\begin{equation*}
    \left| \{Q \supseteq P_I \mid r(Q) - r(P_I) = j\} \right| = \binom{d(P_I)}{j}.
\end{equation*}
The locally closed subsets $D_Q$ partition the quotient, so
\begin{equation*}
    \begin{split}
        \left| (C_{P_I}^*/U_I^F)(\mathbb F_{q^{\delta a}}) \right| & = \sum_{Q \supseteq P_I} |D_Q(\mathbb F_{q^{\delta a}})| \\
        & = |S_{P_I}(\mathbb F_{q^{\delta a}})| \sum_{j = 0}^{d(P_I)} \binom{d(P_I)}{j}(q^{\delta a}-1)^j\\
        & = |S_{P_I}(\mathbb F_{q^{\delta a}})| q^{\delta a d(P_I)}.
    \end{split}
\end{equation*}
This is the number of $\mathbb F_{q^{\delta a}}$-points of $S_{P_I} \times \mathbb A^{d(P_I)}$. The complement of the open image of $f$ therefore has no point over any finite extension of $\mathbb F_{q^\delta}$, so it is empty. Thus $f$ is an isomorphism.

Let $t_i$ be the affine coordinate indexed by $i \in J(P_I)$. Equation \eqref{eq:star-codimension-one} gives
\begin{equation*}
    f(D_{M_i}) = S_{P_I} \times \mathbb G_m^{J(P_I) \setminus \{i\}} \times \{0\}.
\end{equation*}
Taking closures identifies the reduced image of $\overline{C_{M_i}} \cap C_{P_I}^*$ with $S_{P_I} \times \{t_i = 0\}$. The divisors $\overline{C_{M_i}} \cap C_{P_I}^*$ are $U_{P_I}^F$-stable, and their intersection is $C_{P_I}$. Since the fibres of the quotient morphisms are orbits, $D_{P_I}$ is the reduced intersection of their images. It follows that
\begin{equation*}
    f(D_{P_I}) = S_{P_I} \times \{0\}.
\end{equation*}
As $U_I^F$ acts trivially on $C_{P_I}$, the finite universal homeomorphism $C_{P_I}/U_I^F \to D_{P_I}$ gives the required finite universal homeomorphism $C_{P_I} \to S_{P_I}$.

Finally, composing $f$ with the quotient morphism gives $q_{P_I}: C_{P_I}^* \to S_{P_I} \times \mathbb A^{d(P_I)}$. Its projection to $S_{P_I}$ extends $\rho_{P_I}$ by construction. This projection is $P_I^F$-equivariant, with the action on $S_{P_I}$ factoring through $L_{P_I}^{F_{P_I}}$, because this holds on the dense open subset $X$ and $S_{P_I}$ is separated.
\end{proof}

\begin{corollary}\label{cor:star-affine-flat}
    Every open star $C_P^*$ is affine. The morphism
    \begin{equation*}
        q_P: C_P^* \to S_P \times \mathbb A^{d(P)}
    \end{equation*}
    is finite flat of degree $|U_P^F|$.
\end{corollary}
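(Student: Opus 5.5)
The plan is to deduce both assertions from Theorem \ref{thm:star-quotient}, which identifies $C_P^*/U_P^F$ with the smooth affine variety $S_P \times \mathbb A^{d(P)}$. Here $S_P$ is affine because Deligne--Lusztig varieties of Coxeter elements are affine; it is also smooth as a locally closed subvariety of $\mathcal B(L_P)$ defined by transversal conditions.

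\textbf{Affineness and finiteness.} The quotient morphism $C_P^* \to C_P^*/U_P^F$ by the finite group $U_P^F$ is finite. This holds because $C_P^*$ is an open $U_P^F$-stable subset of the projective variety $\overline X$, and the quotient $\overline X/U_P^F$ exists with finite quotient map $\overline X \to \overline X/U_P^F$. The subset $C_P^*$ is the full preimage of its image, since it is $U_P^F$-stable. Hence $q_P$ is the composite of a finite morphism with an isomorphism, so it is finite. A finite morphism is affine, and its target $S_P \times \mathbb A^{d(P)}$ is affine, so $C_P^*$ is affine.

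\textbf{Flatness.} The source $C_P^*$ is smooth, since it is open in the smooth variety $\overline X$, so it is Cohen--Macaulay. The target is regular. The map $q_P$ is finite and surjective between equidimensional varieties of the same dimension $r$. Miracle flatness (EGA IV 6.1.5, or Matsumura 23.1) then gives flatness of $q_P$.

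\textbf{Degree.} Because $q_P$ is finite flat, $(q_P)_*\mathcal O_{C_P^*}$ is locally free, so its rank is constant over the connected (indeed irreducible) target. It therefore suffices to compute the rank at the generic point. Over the dense open $S_P \times \mathbb G_m^{d(P)}$, Proposition \ref{prop:open-quotient} and its proof show that $X \to X/U_P^F$ is a finite étale $U_P^F$-torsor. In the proof, $Z_c \to E_I$ is a $U_I^F$-torsor obtained by pulling back the Lang map, and $Z_c \simeq X$ equivariantly with free action. Hence the generic fibre has degree $|U_P^F|$, and equivalently the function field extension $\overline{\mathbb F_q}(X)/\overline{\mathbb F_q}(X)^{U_P^F}$ is Galois with group $U_P^F$ acting faithfully.

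\textbf{Main obstacle.} The only real difficulty is justifying that the finite quotient restricts well to the open stable subset and that the dimensions match. Both points are routine: $\dim C_P^* = r = r(P) + d(P) = \dim(S_P \times \mathbb A^{d(P)})$.
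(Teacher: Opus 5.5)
Your proposal is correct and follows the paper's proof essentially step for step. In both, finiteness of $q_P$ together with affineness of $S_P \times \mathbb A^{d(P)}$ gives affineness of $C_P^*$, miracle flatness between smooth varieties of equal dimension gives flatness, and the degree $|U_P^F|$ is read off from the $U_P^F$-torsor $X \to X/U_P^F$ over the dense open $S_P \times \mathbb G_m^{d(P)}$. Your check that the finite quotient map restricts to a finite map on the $U_P^F$-stable open $C_P^*$ is a detail the paper leaves implicit.
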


\begin{proof}
The morphism $q_P$ is finite and its target is affine, so $C_P^*$ is affine. Both varieties are smooth of the same dimension, and $q_P$ is surjective. By ``miracle flatness'', see \cite{stacks-project} Tag 00R4 for instance, $q_P$ is flat. Over $S_P \times \mathbb G_m^{d(P)}$, the morphism is the $U_P^F$-torsor $X \to X/U_P^F$, which gives the required degree.
\end{proof}

\section{Wang's conjecture}\label{sec:restriction}

Fix a prime $\ell \not = p$ and an integer $a \geq 1$. We write $\Lambda \coloneq \mathbb Z/\ell^a\mathbb Z$, and also $R\Gamma$ and $R\Gamma_c$ respectively for ordinary étale cohomology and for étale cohomology with compact support. Since $p$ is invertible in $\Lambda$, taking invariants of $\Lambda$-modules under a finite $p$-group is exact, and we shall use this fact repeatedly in this section. 

\subsection{Restriction morphisms and reduction to maximal parabolics}\label{subsec:coxeter-cover}

Choose a representative $\dot c \in N_G(T)$ of $c$. The Deligne-Lusztig covering
\begin{equation*}
    Y = Y(\dot c) \coloneq \{gU \in G/U \mid g^{-1}F(g) \in U \dot c U\} \xrightarrow{\pi} X
\end{equation*}
is a torsor under the finite abelian group
\begin{equation*}
    T^{cF} = \{t \in T \mid \dot cF(t)\dot c^{-1} = t\},
\end{equation*}
whose order is prime to $p$. Its right $T^{cF}$-action commutes with the left $G^F$-action. Let $\overline Y$ be the normalization of $\overline X$ in $Y$, and denote the resulting finite morphism by $\overline{\pi}: \overline Y \to \overline X$. Both actions of $T^{cF}$ and of $G^F$ extend to $\overline Y$, see \cite{bonnafeCompactificationVarietesDeligneLusztig2009} Theorem 1.2. We write
\begin{equation*}
    j: X \hookrightarrow \overline X, \qquad j': Y\hookrightarrow \overline Y.
\end{equation*}
For every proper rational parabolic $P$, we write
\begin{equation*}
    E_P \coloneq \left( \overline{\pi}^{-1}(C_P) \right)_{\mathrm{red}},
\end{equation*}
and denote the natural immersions by
\begin{equation*}
    i_P: \overline{C_P} \hookrightarrow \overline X, \qquad j'_P: E_P \hookrightarrow \overline Y.
\end{equation*}
Restriction along $i_P$ gives the canonical morphism
\begin{equation*}
    R\Gamma(X, \pi_*\Lambda) \simeq R\Gamma(\overline X, Rj_*\pi_*\Lambda) \xrightarrow{\mathrm{res}} R\Gamma(\overline{C_P}, i_P^*Rj_*\pi_*\Lambda).
\end{equation*}
Restricting it to the $U_P^F$-invariants in its source gives
\begin{equation}\label{eq:wang-closed}
    R\Gamma(X, \pi_*\Lambda)^{U_P^F} \to R\Gamma(\overline{C_P}, i_P^*Rj_*\pi_*\Lambda).
\end{equation}

\begin{theorem}\label{thm:wang}
    For every proper rational parabolic $P$, the morphism \eqref{eq:wang-closed} is an isomorphism. It is compatible with the natural actions of $P^F$ and $T^{cF}$.
\end{theorem}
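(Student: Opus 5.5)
Since taking $U_P^F$-invariants is exact on $\Lambda$-modules, I plan to compute both sides of \eqref{eq:wang-closed} on the open star $C_P^*$ and then pass to the quotient $q_P$ of Theorem \ref{thm:star-quotient}. First, I will replace $\overline{C_P}$ by the stratum $C_P$. The complex $i_P^*Rj_*\pi_*\Lambda$ on $\overline{C_P} = \bigsqcup_{Q\subseteq P} C_Q$ should be handled by induction on the strata, together with the analogous restrictions $\overline{C_P}\to\overline{C_Q}$. To keep this manageable, I would instead use the open star directly. By the simple normal crossings structure, the germ of $(\overline X, X)$ along $C_P$ is a product of the germ of $C_P$ with $(\mathbb A^{d(P)}, \mathbb G_m^{d(P)})$. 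So the natural candidate intermediate object is $R\Gamma(C_P^*, Rj_*\pi_*\Lambda|_{C_P^*}) = R\Gamma(X,\pi_*\Lambda)$, and the real issue is the restriction from $C_P^*$ to $C_P$.

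The key steps are as follows. (1) Push forward along the finite morphism $q_P$. Invariants commute with this pushforward, so $(q_{P*}Rj_*\pi_*\Lambda)^{U_P^F}$ is $Rj'_*$ of the sheaf $\mathcal F=(\pi_*\Lambda)$ descended to $X/U_P^F\simeq S_P\times\mathbb G_m^{d(P)}$. This uses that $X\to X/U_P^F$ is a $U_P^F$-torsor of $p$-order and that $\pi$ is tame. (2) Identify the descended local system on $S_P\times\mathbb G_m^{d(P)}$. The quotient $Y/U_P^F$ should be a $T^{cF}$-torsor over $S_P\times\mathbb G_m^{d(P)}$, tamely ramified along the coordinate hyperplanes of $\mathbb A^{d(P)}$; I would expect it to be the external product of a Deligne--Lusztig covering of $S_P$ with Kummer coverings of the $\mathbb G_m$-factors. (3) Apply the tame Künneth formula and the standard computation for Kummer sheaves on $(\mathbb A^1,\mathbb G_m)$. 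Restriction to $0$ is an isomorphism $R\Gamma(\mathbb G_m,\mathcal K)\simeq (Rj_*\mathcal K)_0$, by homotopy invariance for tame sheaves, and similarly over $S_P$: the restriction $R\Gamma(S_P\times\mathbb A^{d},Rj'_*\mathcal F)\to R\Gamma(S_P\times\{0\},Rj'_*\mathcal F)$ is an isomorphism. Since $C_P\to S_P$ is a finite universal homeomorphism, this gives $R\Gamma(X,\pi_*\Lambda)^{U_P^F}\simeq R\Gamma(C_P, i^*Rj_*\pi_*\Lambda)$, where I use that pullback along the universal homeomorphism $C_P\to D_P$ identifies étale cohomology and that $U_P^F$ acts trivially on $C_P$. (4) Pass from $C_P$ to $\overline{C_P}$. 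By \eqref{eq:fixed-locus}, I would run the same argument for every $Q\subseteq P$ and use the open-star cover of $\overline{C_P}$, gluing through a Mayer--Vietoris or Čech argument on $\overline{C_P}=\bigcup_{Q\subseteq P}(C_Q^*\cap\overline{C_P})$. Alternatively, I would observe that $\overline{C_P}\cap C_P^{*}$-neighbourhoods reduce the comparison to the maximal case via Lemma \ref{lem:nested-quotients}-style successive quotients, as in Wang's original argument, which only needs $P$ maximal and then iterates.

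Equivariance should follow formally from the $P^F$-equivariance of the constructions in Theorem \ref{thm:star-quotient} and from the $T^{cF}$-action commuting with everything.

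The main obstacle is step (4) combined with (2): the restriction from $\overline X$ to all of $\overline{C_P}$, not just to $C_P$, since $\overline{C_P}$ is not contained in the open star. I would first try Wang's strategy. Treat maximal $M$ first, where $\overline{C_M}$ is covered by the open stars $C_Q^*\cap \overline{C_M}$ for $Q\subseteq M$, and there apply Theorem \ref{thm:star-quotient} for $Q$ together with the Levi-level identification, descending by induction on the semisimple rank. Then obtain general $P$ by iterating restrictions $\overline X\supset\overline{C_M}\supset\cdots\supset\overline{C_P}$, using exactness of invariants and the fact that $\overline{C_M}$ is itself a compactified Coxeter variety for $L_M$.
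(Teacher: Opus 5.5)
There is a genuine gap at the core of the argument, in steps (2)--(3); your step (1) is fine. You only \emph{expect} $Y/U_P^F \to S_P \times \mathbb G_m^{d(P)}$ to be an external product of a Deligne--Lusztig covering of $S_P$ with Kummer coverings. This is unproved, and not even well posed: the restriction over $S_P$ is a $T^{cF}$-torsor, whereas Deligne--Lusztig coverings of $S_P$ are torsors under a finite torus of $L_P$ that is in general different. Moreover, an abelian prime-to-$p$ torsor on a product is at best a contracted product of torsors on the factors, and even that requires a Künneth theorem for prime-to-$p$ fundamental groups. Likewise, ``homotopy invariance for tame sheaves'' relative to the non-proper base $S_P$ is a substantial statement. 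It needs, for instance, tameness at infinity and compatibility of $Rj'_*$ with base change along relative normal crossings divisors, and you do not supply it.

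The paper avoids all of this by making the coefficients constant through an auxiliary covering, and it does so only for maximal $M$. Since $X/U^F \simeq \mathbb G_m^r$ and the stabilizer $H$ of a component $Y^0$ is abelian of order prime to $p$, the $H$-torsor $Y^0/U^F \to \mathbb G_m^r$ is dominated by the product of Kummer maps $w_j \mapsto w_j^{m_j}$, with kernel $N$. The fibre product $Y^1$ of $X \to X/U^F$ with this power map is then an $N$-torsor over $Y^0$ with $Y^1/U_M^F \simeq S_M^1 \times \mathbb G_m$, which extends to $\overline Y^1/U_M^F \simeq S_M^1 \times \mathbb A^1$. There the comparison is just the Künneth formula with constant coefficients together with $R\Gamma(\mathbb G_m,\Lambda) \simeq i_{\{0\}}^*Rj_{\mathbb G_m,*}\Lambda$. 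One then descends by derived $N$-invariants (note that $\ell$ may divide $|N|$) and by induction from $H$ to $T^{cF}$.

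Two further ingredients are missing.

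First, your step (3) only yields $R\Gamma(X,\pi_*\Lambda)^{U_P^F} \simeq R\Gamma(C_P, i^*Rj_*\pi_*\Lambda)^{U_P^F}$. To remove the invariants on the right you need $U_P^F$ to act trivially on the complex $i^*Rj_*\pi_*\Lambda$, and this does not follow from its trivial action on $C_P$. The paper proves it in two parts. $U_M^F$ fixes $E^0$ pointwise, because it acts on each fibre, an $H$-orbit, through $H$-equivariant automorphisms, which form a group of order prime to $p$. And $U_M^F$ acts trivially on $i^*R^1j_*\Lambda \simeq \Lambda(-1)$, because it changes a local equation of the divisor by a unit whose Kummer class vanishes strictly locally.

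Second, step (4) conflates two distinct reductions that the paper imports from Wang. One is the equivalence between the closed-stratum statement \eqref{eq:wang-closed} and the statement \eqref{eq:wang-open} on $Y$ over the open stratum (Wang's Lemmas 3.2.1--3.2.2). The other is the reduction from arbitrary $P$ to maximal parabolics of Levi subgroups (Wang's Section 3.3, with the induction indexed by $|\Delta/\phi|$ in the nonsplit case). Your iteration $\overline X \supset \overline{C_M} \supset \cdots \supset \overline{C_P}$ is not an argument as it stands. The coefficients $i_M^*Rj_*\pi_*\Lambda$ on $\overline{C_M}$ are not of the form the statement for $L_M$ treats without further input.

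If you made (1)--(3) rigorous for every $P$, using Theorem \ref{thm:star-quotient} in full, you would bypass the maximal reduction and need only the closed/open equivalence. That would be a genuinely different route, but as written the core comparison is missing.
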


The action of $P^F$ on both sides of \eqref{eq:wang-closed} is the obvious one, whereas the $T^{cF}$-action is induced by the action of $T^{cF}$ on $\pi_*\Lambda$. There is a second formulation of Theorem \ref{thm:wang} on $Y$ as follows. Restriction along $j'_P$ gives
\begin{equation*}
    R\Gamma(Y, \Lambda) \simeq R\Gamma(\overline Y, Rj'_*\Lambda) \xrightarrow{\mathrm{res}} R\Gamma(E_P, j_P'^*Rj'_*\Lambda),
\end{equation*}
and hence a morphism
\begin{equation}\label{eq:wang-open}
    R\Gamma(Y, \Lambda)^{U_P^F} \to R\Gamma(E_P, j_P'^*Rj'_*\Lambda).
\end{equation}

\begin{theorem}\label{thm:wang-open}
    For every proper rational parabolic $P$, the morphism \eqref{eq:wang-open} is an isomorphism. It is compatible with the actions of $P^F$ and $T^{cF}$.
\end{theorem}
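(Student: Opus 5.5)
I will deduce Theorem \ref{thm:wang-open} from Theorem \ref{thm:wang} by pushing forward along the finite morphism $\overline{\pi}$. The basic identity is $\overline{\pi}\circ j' = j\circ \pi$, which gives
\begin{equation*}
    \overline{\pi}_* Rj'_*\Lambda \simeq Rj_*\pi_*\Lambda .
\end{equation*}
Since $\overline\pi$ is finite, $\overline\pi_*$ is exact, and this isomorphism is $G^F\times T^{cF}$-equivariant because both actions extend to $\overline Y$.

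The plan proceeds in four steps.

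\begin{enumerate}
\item \emph{Compare the sources.} Since $\pi$ is finite, $R\Gamma(Y,\Lambda)\simeq R\Gamma(X,\pi_*\Lambda)$. This identification is compatible with $P^F$ and $T^{cF}$, so the sources of \eqref{eq:wang-open} and \eqref{eq:wang-closed} agree after taking $U_P^F$-invariants.

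\item \emph{Compare the targets.} Consider the restriction of $\overline\pi$ over $\overline{C_P}$, namely $\overline\pi_P:\overline\pi^{-1}(\overline{C_P})_{\mathrm{red}}\to\overline{C_P}$. Proper base change for the finite morphism $\overline\pi$ gives
\begin{equation*}
    i_P^*\overline\pi_*\mathcal F\simeq \overline\pi_{P*}(\cdot)^*\mathcal F ,
\end{equation*}
where the pullback is along the closed immersion of $\overline\pi^{-1}(\overline{C_P})$. The étale site is insensitive to nilpotents, so the reduced structure is harmless. Hence the target of \eqref{eq:wang-closed} becomes $R\Gamma(\overline\pi^{-1}(\overline{C_P}), (Rj'_*\Lambda)_{|})$.

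\item \emph{Reconcile $E_P$ with $\overline\pi^{-1}(\overline{C_P})$.} Theorem \ref{thm:wang-open} is stated with $E_P=\overline\pi^{-1}(C_P)_{\mathrm{red}}$ and not the closure. I expect this reconciliation to be the main obstacle. It could be a notational convention, with $E_P$ meaning the preimage of the closed stratum $\overline{C_P}$. If it genuinely is the preimage of the open stratum $C_P$, then I would compare instead with the version of Theorem \ref{thm:wang} restricted to $C_P$.

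For that comparison I would use Theorem \ref{thm:star-quotient}. The open star $C_P^*$ is a neighbourhood of $C_P$, and $C_P^*/U_P^F\simeq S_P\times\mathbb A^{d(P)}$ with $C_P\mapsto S_P\times\{0\}$. The $U_P^F$-invariant part of $Rj_*\pi_*\Lambda$ restricted there is then pulled back from $S_P\times\mathbb A^{d(P)}$, with tame ramification along coordinate hyperplanes since $T^{cF}$ has order prime to $p$. Restriction from $S_P\times\mathbb A^{d(P)}$ to the zero section is then an isomorphism, by an $\mathbb A^1$-homotopy argument for tamely ramified sheaves along a normal crossings divisor.

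In whatever form Theorem \ref{thm:wang} is established (by the same star argument), it therefore applies equally to $C_P$ and to $\overline{C_P}$. So I would simply run the proof of Theorem \ref{thm:wang} with $C_P$ in place of $\overline{C_P}$ if needed. The point is that the restriction $R\Gamma(\overline{C_P},\cdot)\to R\Gamma(C_P,\cdot)$ is itself an isomorphism on these sheaves, which follows from Theorem \ref{thm:wang} for all $Q\subseteq P$ together with the stratification \eqref{eq:stratification}.

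\item \emph{Conclude.} Check that under the identifications of the first two steps, the restriction morphism \eqref{eq:wang-open} corresponds to \eqref{eq:wang-closed}. This holds by naturality of base change with respect to the adjunction unit of restriction. Exactness of $U_P^F$-invariants, since $p$ is invertible in $\Lambda$, ensures that taking invariants commutes with these identifications. Equivariance for $P^F$ and $T^{cF}$ follows because every map used is natural and the actions extend to $\overline Y$.
\end{enumerate}
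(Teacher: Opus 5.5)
\textbf{There is a genuine gap: the proposal is circular.} In the paper, Theorem \ref{thm:wang} has no proof independent of Theorem \ref{thm:wang-open}. The paper proves Theorem \ref{thm:wang-open} directly, then obtains Theorem \ref{thm:wang} from it through the equivalence of Wang's Lemmas 3.2.1 and 3.2.2. Your Steps 1, 2 and 4, together with the closure-versus-open-stratum comparison in Step 3, only reproduce that equivalence in the reverse direction. So nothing of substance is proved. The sentence ``in whatever form Theorem \ref{thm:wang} is established'' stands exactly where the actual proof should be.

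Two things are missing. First, the reduction to a maximal proper rational parabolic $M$, by induction on $|\Delta/\phi|$ (Lemma \ref{lem:wang-reduction}). Second, the maximal case itself. There, Theorem \ref{thm:star-quotient} gives $C_M^*/U_M^F\simeq S_M\times\mathbb A^1$, but $Y/U_M^F\to S_M\times\mathbb G_m$ is a $T^{cF}$-torsor with no evident product structure. The paper therefore passes to a component $Y^0$ and to an auxiliary $N$-torsor $Y^1\to Y^0$, built from the Kummer factorization through $\pi_1^{(p')}(\mathbb G_m^r)$. For this cover, $\overline Y^1/U_M^F\simeq S_M^1\times\mathbb A^1$ and the coefficients become constant. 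The paper then:
\begin{enumerate}
\item shows that $U_M^F$ acts trivially on $i_1^*Rj_{1,*}\Lambda$, using purity and Kummer classes;
\item reduces by K\"unneth to the isomorphism $R\Gamma(\mathbb G_m,\Lambda)\simeq i_{\{0\}}^*Rj_{\mathbb G_m,*}\Lambda$;
\item descends along $N$, and then from $H$ to $T^{cF}$.
\end{enumerate}

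Your remarks in Step 3 about the open star gesture at this, but they do not fill the gap. The decisive claim is that restriction from $S_P\times\mathbb G_m^{d(P)}$ to the zero section is an isomorphism for the $U_P^F$-invariant sheaf, ``by an $\mathbb A^1$-homotopy argument''. That is exactly the hard step, and it is left unproved; it is what the auxiliary covering is designed to make rigorous. You also never show that $U_P^F$ acts trivially on the boundary complex, which is needed to identify the target with a computation on the quotient.

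Finally, your justification that $R\Gamma(\overline{C_P},-)\to R\Gamma(C_P,-)$ is an isomorphism ``from Theorem \ref{thm:wang} for all $Q\subseteq P$ together with the stratification'' does not work, even granting Theorem \ref{thm:wang}. Those are global statements about $U_Q^F$-invariants. They say nothing about the local cohomology of $i_P^*Rj_*\pi_*\Lambda$ along $\overline{C_P}\setminus C_P$. What is needed is the local fact that this complex is the derived pushforward of its restriction to $C_P$. That fact comes from the local product structure of tame coefficients along a simple normal crossings divisor.
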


Theorems \ref{thm:wang} and \ref{thm:wang-open} are respectively the Conjectures A and A' of \cite{wangCohomologieCompactificationVarietes2014}. The equivalence between both statements is proved in \cite{wangCohomologieCompactificationVarietes2014} Lemmas 3.2.1 and 3.2.2. It is therefore enough to prove Theorem \ref{thm:wang-open}. 

\begin{remark}
    Wang proved Theorems \ref{thm:wang} and \ref{thm:wang-open} for $G = \mathrm{GL}_d$ with the standard Frobenius $F((a_{ij})) = (a_{ij}^q)$ and the Coxeter element $c = (1 \ \ldots \ d) \in \mathfrak S_d$, see \cite{wangCohomologieCompactificationVarietes2014} Theorem 5.1.1. A crucial step in his proof is the quotient isomorphism of Theorem \ref{thm:star-quotient} for \textit{maximal} proper rational parabolics. He establishes this isomorphism in \cite{wangCohomologieCompactificationVarietes2014} Section 4.2 using the description of $\overline X$ as a succession of blow-ups of a projective space. Our quotient theorem provides the corresponding geometric input for arbitrary pairs $(G, F)$.
\end{remark}

We also use Wang's reduction to maximal proper rational parabolics. 

\begin{lemma}\label{lem:wang-reduction}
    Suppose that, for every rational parabolic $P$ of $G$, Theorem \ref{thm:wang-open} holds for any Deligne-Lusztig covering of any Coxeter variety for $(L_P, F_P)$, with respect to every maximal proper rational parabolic of $L_P$. Then it holds for every Deligne-Lusztig covering of every Coxeter variety for $(G, F)$, with respect to every proper rational parabolic of $G$.
\end{lemma}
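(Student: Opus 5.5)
We argue by induction on $d(P)$, the relative corank of $P$. When $d(P) = 1$ the parabolic $P$ is itself a maximal proper rational parabolic of $G = L_G$, so the hypothesis applied to the rational parabolic $G$ already gives the claim. For $d(P) \geq 2$, we pick a maximal proper rational parabolic $M \supsetneq P$. Then $Q \coloneq P/U_M$ is a proper rational parabolic of $L_M$, with unipotent radical $U_P/U_M$ and Levi quotient identified with $L_P$. In particular $d_{L_M}(Q) = d(P) - 1$. We also note that the hypothesis of the lemma is inherited by $L_M$, since every Levi quotient of $L_M$ is a Levi quotient of $G$. The morphism \eqref{eq:wang-open} for $P$ will then be factored as
\begin{equation*}
    R\Gamma(Y, \Lambda)^{U_P^F} = \bigl( R\Gamma(Y, \Lambda)^{U_M^F} \bigr)^{U_P^F/U_M^F} \to R\Gamma(E_M, j_M'^*Rj'_*\Lambda)^{U_P^F/U_M^F} \to R\Gamma(E_P, j_P'^*Rj'_*\Lambda).
\end{equation*}
The first arrow is the case of $M$, taken as an isomorphism by hypothesis, followed by the exact functor of $U_P^F/U_M^F$-invariants. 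The second arrow is restriction from $\overline{E_M}$ to its stratum $E_P \subseteq \overline{E_M}$, and it remains to show that this arrow is an isomorphism. Transitivity of restriction shows that the composite is \eqref{eq:wang-open}.

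For this second arrow, we want to identify the complex $j_M'^*Rj'_*\Lambda$ on $E_M$, or at least its cohomology, with the cohomology of a Deligne--Lusztig covering of the Coxeter variety $C_M$ for $(L_M, F_M)$, tensored with the cohomology of a punctured disc transverse to $C_M$. The tool for this is the local product structure near $C_M$ given by Theorem \ref{thm:star-quotient} and Corollary \ref{cor:star-affine-flat}. Concretely, Proposition \ref{prop:maximal-product} gives $C_M^*/U_M^F \simeq S_M \times \mathbb A^1$, and the normalization $\overline Y$ restricted over $C_M^*$ is tamely ramified along $C_M$ because $|T^{cF}|$ is prime to $p$. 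Abhyankar's lemma and the normal crossings structure then let us write $j_M'^*Rj'_*\Lambda$ in terms of the pushforward of $\Lambda$ from a $T^{cF}$-covering of $C_M$, which should be a Deligne--Lusztig covering $Y_{L_M}$ for a Coxeter element of $L_M$, twisted by the inertia coinvariants. The same description holds over the closure: $\overline{E_M}$ is a compactification of $E_M$ of the same Bott--Samelson type, whose strata are the $E_{P'}$ for $P' \subseteq M$. After this identification, the second arrow becomes \eqref{eq:wang-open} for $L_M$, the covering $Y_{L_M}$ and the parabolic $Q$, tensored with a fixed local factor. That case is an isomorphism by the induction hypothesis, since $d_{L_M}(Q) < d(P)$.

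Compatibility with $P^F$ and $T^{cF}$ is automatic, since every arrow in the factorization is a restriction or an invariants map, and these are natural for both actions. I expect the main obstacle to be the identification of $j_M'^*Rj'_*\Lambda$ together with its higher strata. In particular, one must check that the local system along $C_M$ really arises from a Deligne--Lusztig covering of a Coxeter variety for $L_M$, and that the nearby-cycle description is compatible with all deeper strata $E_{P'}$ at once. This is precisely where Wang's Lemmas 3.2.1 and 3.2.2 and his explicit computation of the inertia action should be invoked rather than redone.
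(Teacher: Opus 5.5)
Your skeleton is the paper's, namely Wang's argument from Section 3.3. You factor through a maximal proper rational parabolic $M \supseteq P$ and apply the statement to $L_M$ and $P/U_M$. You also build in the correction the paper makes for the nonsplit case: taking $M$ maximal among proper rational parabolics is what makes the relative corank drop by exactly one. Inducting on $d(P)$ simultaneously over all Levi quotients is equivalent to the paper's induction on $|\Delta/\phi|$, and leaving the boundary computation to Wang is also what the paper does.

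However, the factorization fails as written. The middle term $R\Gamma(E_M, j_M'^*Rj'_*\Lambda)$ lives over the open stratum $C_M$, while $E_P$ lies over $C_P \subseteq \overline{C_M}\setminus C_M$. So there is no restriction map from it to $R\Gamma(E_P, j_P'^*Rj'_*\Lambda)$, and transitivity of restriction cannot be invoked. You must factor through $R\Gamma(\overline{C_M}, i_M^*Rj_*\pi_*\Lambda)$, that is, through the closed form \eqref{eq:wang-closed} for $M$. To know that the first arrow is then still an isomorphism, you need the comparison between the closed and open forms, which the paper attributes to Wang's Lemmas 3.2.1 and 3.2.2. This comparison amounts to the identity $i_M^*Rj_*\pi_*\Lambda \simeq R\iota_*\iota^*i_M^*Rj_*\pi_*\Lambda$ on $\overline{C_M}$, where $\iota\colon C_M \hookrightarrow \overline{C_M}$. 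It follows from the tame normal crossings structure. The same identity is exactly what controls the deeper strata $E_{P'}$ that you flag as the main obstacle, so it should be stated as the explicit key input.

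There is a second problem. In general, $E_M \to C_M$ is not a Deligne--Lusztig covering for $L_M$: it is a torsor under the quotient of $T^{cF}$ by the inertia along $C_M$. Already for $G = \mathrm{SL}_2$, the fibre of $\overline Y$ over a boundary point is a single point fixed by $T^{cF} \simeq \mu_{q+1}$. By contrast, a Deligne--Lusztig covering of that point for $L_B = T$ is a $T^F$-torsor, which has $q-1>1$ points when $q>2$. The hypothesis of the lemma only concerns Deligne--Lusztig coverings, so you need an additional descent step. It should relate $E_M$ to such a covering of $C_M$ through a prime-to-$p$ subgroup of the torus, using that derived invariants under that group commute with $U_P^F/U_M^F$-invariants, as in the proof of Proposition \ref{prop:wang-maximal}. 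Similarly, $j_M'^*Rj'_*\Lambda$ is only an extension of $\Lambda(-1)[-1]$ by $\Lambda$, given by the Gysin triangle of the smooth pair near $E_M$. So ``tensored with a fixed local factor'' should be replaced by this triangle and the five lemma.
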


In the hypothesis of this lemma, the case $P = G$ is included. The reduction is proved in \cite{wangCohomologieCompactificationVarietes2014} Section 3.3, with a minor correction needed in the nonsplit case. Indeed, the proof there requires a $\phi$-stable subset $I \subset \Delta$ with $|I| = |\Delta|-1$, which need not exist. This is remedied by indexing the induction by the relative semisimple rank $|\Delta/\phi|$ instead. For a nonmaximal proper $\phi$-stable subset $J \subset \Delta$, one can choose a maximal proper $\phi$-stable subset $I$ containing $J$, and then 
\begin{equation*}
    |I/\phi| = |\Delta/\phi|-1,
\end{equation*}
since $\Delta \setminus I$ is a single $\phi$-orbit. With this adjustment, the remainder of Wang's argument applies unchanged.

In virtue of Lemma \ref{lem:wang-reduction}, it is enough to prove Theorem \ref{thm:wang-open} when $P = M$ is a maximal proper rational parabolic of $G$, uniformly for all pairs $(G, F)$.

\subsection{An auxiliary covering}\label{subsec:wang-maximal}

We adapt the proof of \cite{wangCohomologieCompactificationVarietes2014} Section 5.2. We first construct an auxiliary covering whose unipotent quotient is a product. On this product, the cohomological comparison is made with constant coefficients and reduces to the Künneth formula. We then descend the resulting isomorphism to $Y$, following \cite{wangCohomologieCompactificationVarietes2014} Section 5.2.3. 

Let $M$ be a maximal proper rational parabolic of $G$. By conjugation under $G^F$, we may assume that $M$ is standard. Write $J(M) = \{i\}$. Theorem \ref{thm:star-quotient} gives
\begin{equation*}
    C_M^*/U_M^F \simeq S_M \times \mathbb A^1, \qquad X/U_M^F \simeq S_M \times \mathbb G_m.
\end{equation*}
On the second factor, we use the coordinate whose pullback to $X$ is the function $b_i \in \{a_i,a_i^{-1}\}$ selected by the theorem. Accordingly, on the isomorphism $X/U^F \simeq \mathbb G_m^r$ afforded by Proposition \ref{prop:open-quotient}, we use $b_i$ as the $i$-th coordinate and $a_j$ for $j \not = i$. Lemma \ref{lem:nested-quotients} gives a commutative diagram
\begin{equation}\label{eq:commutative-diagram}
    \begin{tikzcd}
        X/U_M^F \arrow[r] \arrow[d, "\sim", sloped] & X/U^F \arrow[d, "\sim", sloped]\\
        S_M \times \mathbb G_m \arrow[r] & \mathbb G_m^{\{j \not = i\}} \times \mathbb G_m
    \end{tikzcd}
\end{equation}
The top map is the quotient by $U^F/U_M^F$. The bottom map is the product of
\begin{equation*}
    S_M \to S_M/ (U^F/U_M^F) \simeq \mathbb G_m^{\{j \not = i\}}
\end{equation*}
with the identity on $\mathbb G_m$. 

Choose a connected component $Y^0$ of $Y$, and let $ H\subseteq T^{cF}$ be its stabilizer. Since $X$ is connected, $T^{cF}$ acts transitively on the connected components of $Y$. The restriction $Y^0 \to X$ is an $H$-torsor, and
\begin{equation*}
    Y \simeq Y^0 \times^H T^{cF}.
\end{equation*}
In particular, the connected components of $Y$ are indexed by $T^{cF}/H$.

The action of $U^F$ on $Y$ commutes with $T^{cF}$. Its action on the set of connected components therefore defines a homomorphism
\begin{equation*}
    U^F \to \mathrm{Aut}_{T^{cF}}(T^{cF}/H) \simeq T^{cF}/H.
\end{equation*}
Since $U^F$ is a $p$-group and $T^{cF}/H$ has order prime to $p$, this homomorphism is trivial. Thus $U^F$ preserves every connected component of $Y$, and in particular acts on $Y^0$. Since $U^F$ acts freely on $X$, passing to $U^F$-quotients gives an $H$-torsor
\begin{equation*}
    Y^0/U^F \to X/U^F \simeq \mathbb G_m^r.
\end{equation*}
Its source is connected, being the image of the connected variety $Y^0$ under the quotient morphism.

We recall the covering construction of \cite{bonnafe_coxeter_2006} Section 3.2.2, also used in \cite{wangCohomologieCompactificationVarietes2014} Section 5.2. The maximal prime-to-$p$ quotient of the geometric fundamental group of $\mathbb G_m^r$ satisfies
\begin{equation*}
    \pi_1^{(p')}(\mathbb G_m^r) \simeq \prod_{j=1}^r \pi_1^{(p')}(\mathbb G_m),
\end{equation*}
where each factor is procyclic. The connected $H$-torsor above corresponds to a continuous surjection
\begin{equation*}
    \prod_{j=1}^r \pi_1^{(p')}(\mathbb G_m) \twoheadrightarrow H.
\end{equation*}
Let $m_j$ be the order of the image of the $j$-th factor. Then $m_j$ divides $|H|$ and is prime to $p$. The restriction to this factor induces an embedding $\mu_{m_j} \hookrightarrow H$, where the quotient $\pi_1^{(p')}(\mathbb G_m)\twoheadrightarrow\mu_{m_j}$ corresponds to the Kummer covering $w_j \mapsto w_j^{m_j}$. Since $H$ is abelian, these embeddings give an exact sequence
\begin{equation*}
    \{1\} \to N \to \prod_{j = 1}^r \mu_{m_j} \to H \to \{1\},
\end{equation*}
where $N$ denotes the kernel. Since each restriction $\mu_{m_j} \to H$ is injective,
\begin{equation}\label{eq:auxiliary-kernel}
    \forall 1 \leq j \leq r, \qquad N \cap \mu_{m_j} = \{1\},
\end{equation}
where $\mu_{m_j}$ is viewed as the $j$-th factor of $\prod_{j=1}^r \mu_{m_j}$. The original surjection to $H$ thus factors through the quotient $\pi_1^{(p')}(\mathbb G_m^r) \twoheadrightarrow \prod_{j=1}^r \mu_{m_j}$ associated with the power map
\begin{equation*}
    \mathbb G_m^r \to \mathbb G_m^r, \qquad (w_j)_{1 \leq j \leq r} \mapsto (w_j^{m_j})_{1 \leq j \leq r}.
\end{equation*}
The Galois correspondence therefore gives a factorization of this map
\begin{equation}\label{eq:auxiliary-torus-cover}
    \mathbb G_m^r \to Y^0/U^F \to \mathbb G_m^r,
\end{equation}
whose first morphism is an $N$-torsor. 

Since $U^F$ acts freely on $X$, it also acts freely on $Y^0$. Thus both quotient morphisms $X \to X/U^F$ and $Y^0 \to Y^0/U^F$ are $U^F$-torsors. The natural morphism
\begin{equation*}
    Y^0 \to X \times_{X/U^F} (Y^0/U^F), \qquad y \mapsto (\pi(y), [y]),
\end{equation*}
where $[y]$ denotes the image of $y$ in $Y^0/U^F$, is $U^F$-equivariant over $Y^0/U^F$. Its target is also a $U^F$-torsor over this base by base change from $X \to X/U^F$. It is therefore an isomorphism.

Using \eqref{eq:auxiliary-torus-cover} and the preceding identification, we obtain a morphism
\begin{equation*}
    \pi^1: Y^1 \coloneq X \times_{\mathbb G_m^r} \mathbb G_m^r \to Y^0,
\end{equation*}
where the morphism $\mathbb G_m^r \to \mathbb G_m^r$ in the fibre product is the power map. The factorization \eqref{eq:auxiliary-torus-cover} gives a commutative diagram
\begin{equation*}
    \begin{tikzcd}
        Y^1 \arrow[r, "\pi^1"] \arrow[d] & Y^0 \arrow[r, "\pi"] \arrow[d] & X \arrow[d]\\
        \mathbb G_m^r \arrow[r] & Y^0/U^F \arrow[r] & X/U^F \simeq \mathbb G_m^r.
    \end{tikzcd}
\end{equation*}
The right-hand square is cartesian by the preceding argument. The outer rectangle is cartesian by the definition of $Y^1$, since the composition of the lower horizontal arrows is the power map. Hence the left-hand square is also cartesian. Its lower horizontal morphism is an $N$-torsor, so $\pi^1$ is an $N$-torsor as well. The actions of $U^F$ on $X$ and of $\prod_{j = 1}^r \mu_{m_j}$ on the second factor induce commuting actions on $Y^1$. Define also
\begin{equation*}
    S_M^1 \coloneq S_M \times_{\mathbb G_m^{\{j \not = i\}}} \mathbb G_m^{\{j \not = i\}},
\end{equation*}
using the power maps $w_j \mapsto w_j^{m_j}$ for $j \not = i$. The morphism $S_M^1 \to S_M$ is finite étale, so $S_M^1$ is smooth and affine. By the commutative diagram \eqref{eq:commutative-diagram} and the definitions of $Y^1$ and $S_M^1$, the square
\begin{equation*}
    \begin{tikzcd}
        Y^1 \arrow[r] \arrow[d] & S_M^1 \times \mathbb G_m \arrow[d]\\
        X \arrow[r, "{(\rho_M,b_i)}"] & S_M \times \mathbb G_m
    \end{tikzcd}
\end{equation*}
is cartesian, where the right-hand morphism is the product of $S_M^1 \to S_M$ and $w_i \mapsto w_i^{m_i}$. The lower horizontal morphism is a $U_M^F$-torsor, since it identifies with $X \to X/U_M^F$. Its base change is therefore a $U_M^F$-torsor as well, giving an $N$-equivariant isomorphism
\begin{equation}\label{eq:auxiliary-open-product}
    Y^1/U_M^F \simeq S_M^1 \times \mathbb G_m.
\end{equation}
Let $\overline Y^0$ be the normalization of $C_M^*$ in $Y^0$. It comes together with a finite morphism $\overline Y^0 \to C_M^*$. Define then
\begin{equation*}
    E^0 \coloneq \left(\overline Y^0 \times_{C_M^*} C_M\right)_{\mathrm{red}}.
\end{equation*} 
Thus $\overline Y^0$ is an open and closed subvariety of $\overline{\pi}^{-1}(C_M^*)$. The commuting actions of $U^F$ and $H$ on $Y^0$ extend to $\overline Y^0$ by functoriality of normalization. By \cite{bonnafeCompactificationVarietesDeligneLusztig2009} Theorem 1.2 (a) and (d), the variety $\overline Y^0$ is smooth and $E^0$ is a smooth divisor. The following lemma adapts the constructions of \cite{wangCohomologieCompactificationVarietes2014} Section 5.2. 

\begin{lemma}\label{lem:auxiliary-cover-extension}
    There exist a smooth variety $\overline Y^1$ containing $Y^1$ as a dense open subvariety, and a finite étale $N$-torsor
    \begin{equation*}
        \overline{\pi}^1: \overline Y^1 \to \overline Y^0
    \end{equation*}
    whose restriction over $Y^0$ agrees with $\pi^1: Y^1 \to Y^0$. The action of $U_M^F$ on $Y^1$ extends to $\overline Y^1$ and commutes with the $N$-action. Put $E^1 \coloneq (\overline{\pi}^1)^{-1}(E^0)$. Then $E^1$ is a smooth divisor and $U_M^F$ fixes $E^1$ pointwise. Moreover, \eqref{eq:auxiliary-open-product} extends to an isomorphism
    \begin{equation*}
        \overline Y^1/U_M^F \simeq S_M^1 \times \mathbb A^1,
    \end{equation*}
    under which the reduced image of $E^1$ is $S_M^1 \times \{0\}$.
\end{lemma}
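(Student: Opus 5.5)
The plan is to construct $\overline Y^1$ as a normalization and to show it is étale over $\overline Y^0$ by computing the ramification along $E^0$. Define $\overline Y^1$ as the normalization of $\overline Y^0$ in $Y^1$ (equivalently of $C_M^*$ in $Y^1$). Then $\overline Y^1 \to \overline Y^0$ is finite. The actions of $N$ and $U_M^F$ on $Y^1$ commute and preserve the map to $Y^0$ and to $C_M^*$, so by functoriality of normalization they extend to $\overline Y^1$ and still commute. It remains to show three things: $\overline Y^1 \to \overline Y^0$ is étale, the quotient by $U_M^F$ is $S_M^1 \times \mathbb A^1$, and $U_M^F$ fixes $E^1$ pointwise.

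The key step is to realise the normalization in terms of the quotient. Consider
\begin{equation*}
    Z \coloneq \overline Y^0 \times_{C_M^*/U_M^F} \left(S_M^1 \times \mathbb A^1\right),
\end{equation*}
where $C_M^*/U_M^F \simeq S_M \times \mathbb A^1$ comes from Theorem \ref{thm:star-quotient}, and $S_M^1 \times \mathbb A^1 \to S_M \times \mathbb A^1$ is the product of the finite étale map $S_M^1 \to S_M$ with the Kummer map $w_i \mapsto w_i^{m_i}$. Over $X$, the cartesian square preceding \eqref{eq:auxiliary-open-product} together with the identification $Y^0 = X \times_{X/U^F} Y^0/U^F$ shows that $Z$ restricted over $Y^0$ is a disjoint union of copies of $Y^1$. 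The precise count uses that $N$ meets each $\mu_{m_j}$ trivially, \eqref{eq:auxiliary-kernel}. The map $Z \to \overline Y^0$ is étale away from $E^0$, and is ramified along $E^0$ only through the Kummer extraction of the $m_i$-th root of $b_i$.

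The main obstacle is to show that this ramification is already absorbed by $\overline Y^0 \to C_M^*$. Concretely, one must show that the pullback of $b_i$ to $\overline Y^0$ has order along $E^0$ divisible by $m_i$, up to a unit, locally. I would argue this through the $H$-torsor $Y^0/U^F \to \mathbb G_m^r$. By construction it factors through the Kummer cover of degree $m_i$ in the $i$-th coordinate with kernel $N$ disjoint from $\mu_{m_i}$. This means that $\mu_{m_i}$ injects into the inertia group of $\overline Y^0/U^F$ over the divisor $\{b_i = 0\}$. So $b_i$ becomes an $m_i$-th power of a uniformizer times a unit there. This is Abhyankar's lemma, applicable since $m_i$ is prime to $p$ and the ramification is tame. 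Pulling back to the smooth $\overline Y^0$, which is known by Bonnafé–Rouquier, the extraction of an $m_i$-th root of $b_i$ is then étale. The other coordinates $w_j$, $j \neq i$, are invertible on all of $C_M^*$, since they come from $S_M$. Hence the normalization of $Z$ is étale over $\overline Y^0$. Its component containing $Y^1$, which is connected via the $N$-action, is $\overline Y^1$, and $\overline Y^1 \to \overline Y^0$ is a finite étale $N$-torsor. Consequently $\overline Y^1$ is smooth and $E^1$ is a smooth divisor.

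For the quotient, I would argue as follows. The induced morphism $\overline Y^1/U_M^F \to S_M^1 \times \mathbb A^1$ is finite and birational onto a normal (smooth) target, because it is an isomorphism over $\mathbb G_m$ by \eqref{eq:auxiliary-open-product}. By Zariski's main theorem it is then an isomorphism. The image of $E^1$ is the zero locus of the $m_i$-th root of $b_i$, namely $S_M^1 \times \{0\}$. Finally, $U_M^F$ fixes $E^0$ pointwise, because it fixes $C_M$ and the fibres of $E^0 \to C_M$ are $H$-orbits, which are preserved by the $p$-group $U_M^F$ acting through a prime-to-$p$ group. The same argument applies to $E^1$ over $E^0$ with $N$, so $U_M^F$ fixes $E^1$ pointwise.
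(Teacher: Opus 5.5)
Your argument is correct, but it takes a different route from the paper.

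\textbf{The paper's route.} The paper never forms a ramified fibre product. It uses \eqref{eq:auxiliary-kernel} for the index $i$ to see that $N$ acts freely on $S_M^1 \times \mathbb A^1$. It then identifies $(S_M^1 \times \mathbb A^1)/N$ with $\overline Y^0/U_M^F$ by uniqueness of normalization over $S_M \times \mathbb A^1$, and defines
\begin{equation*}
    \overline Y^1 \coloneq \overline Y^0 \times_{\overline Y^0/U_M^F} (S_M^1 \times \mathbb A^1).
\end{equation*}
Everything else then comes almost for free:
\begin{itemize}
    \item étaleness of $\overline{\pi}^1$ is base change of an étale $N$-torsor;
    \item the quotient isomorphism follows because finite quotients commute with flat base change;
    \item $U_M^F$ fixes $E^1$ pointwise because it acts only through the $\overline Y^0$ factor.
\end{itemize}

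\textbf{Your route.} You take the fibre product $Z$ over the coarser base $C_M^*/U_M^F$, where the Kummer map is genuinely ramified, so you must show that normalization removes the ramification. The same input $N \cap \mu_{m_i} = \{1\}$ enters through a different door. The normalization of $C_M^*/U^F \simeq \mathbb G_m^{r-1} \times \mathbb A^1$ in $Y^0/U^F$ is $(\mathbb G_m^{r-1} \times \mathbb A^1)/N$, on which $N$ acts freely, so its ramification index along $\{b_i = 0\}$ is exactly $m_i$. Hence $b_i = g^{m_i}u$ Zariski-locally with $u$ a unit, on that quotient and therefore on $\overline Y^0$. Adjoining $u^{1/m_i}$ is étale since $m_i$ is prime to $p$, and this gives the normalization of $Z$ locally.

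This works, but it costs more than the paper's argument. You need the identification of $\overline Y^0/U^F$ with that normalization, a tame-inertia or local computation, and a normalization step for $Z$. The paper extracts all of this from the single freeness observation.

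\textbf{Remaining steps.} Your other steps are sound: the quotient via a finite birational morphism onto a normal target, and the $N$-torsor fibre argument for $E^1$. The birationality step needs $\overline Y^1/U_M^F$ to be integral. This holds because $Y^1$ is connected: $X \to X/U^F$ is a connected $U^F$-torsor, while $\prod_j \mu_{m_j}$ has order prime to $p$.

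\textbf{Minor slips.} First, $Z|_{Y^0} \simeq Y^0 \times_X Y^1$ consists of $|H|$ copies of $Y^1$, and this count does not use \eqref{eq:auxiliary-kernel}. Second, it is the Kummer cover that factors through $Y^0/U^F$, not the reverse.
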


\begin{proof}
Under the isomorphism \eqref{eq:auxiliary-open-product}, the group $\prod_{j \not = i} \mu_{m_j}$ acts on $S_M^1$ through the covering $S_M^1 \to S_M$, while $\mu_{m_i}$ acts on the second factor $\mathbb G_m$ by multiplication. The latter action extends to $\mathbb A^1$, giving an action of $\prod_j \mu_{m_j}$ on $S_M^1 \times \mathbb A^1$. The subgroup $N$ acts freely on this product. Indeed, $\prod_{j\not = i} \mu_{m_j}$ acts freely on $S_M^1$, and by \eqref{eq:auxiliary-kernel} the projection
\begin{equation*}
    N \to \prod_{j \not = i} \mu_{m_j}
\end{equation*}
is injective. Hence $S_M^1 \times \mathbb A^1 \to (S_M^1 \times \mathbb A^1)/N$ is a finite étale $N$-torsor.

Both $(S_M^1 \times \mathbb A^1)/N$ and $\overline Y^0/U_M^F$ are normal and finite over $S_M \times \mathbb A^1$. Over $S_M \times \mathbb G_m$, the first variety restricts to
\begin{equation*}
    (S_M^1 \times \mathbb G_m)/N \simeq (Y^1/U_M^F)/N \simeq (Y^1/N)/U_M^F \simeq Y^0/U_M^F,
\end{equation*}
using the $N$-equivariance of \eqref{eq:auxiliary-open-product}, the commuting actions of $N$ and $U_M^F$, and the identification $Y^1/N \simeq Y^0$. The second variety also restricts to $Y^0/U_M^F$ by the definition of $\overline Y^0$. By uniqueness of the normalization, the two varieties are therefore isomorphic over $S_M \times \mathbb A^1$. Define $\overline Y^1$ by the cartesian square
\begin{equation*}
    \begin{tikzcd}
        \overline Y^1 \arrow[r] \arrow[d,"\overline{\pi}^1"] & S_M^1 \times \mathbb A^1 \arrow[d]\\
        \overline Y^0 \arrow[r] & \overline Y^0/U_M^F.
    \end{tikzcd}
\end{equation*}
The left vertical arrow is a finite étale $N$-torsor whose restriction over $Y^0$ is $\pi^1$. The action of $U_M^F$ through the factor $\overline Y^0$ extends its action on $Y^1$ and commutes with the action of $N$ through the other factor. Since the right vertical arrow is finite étale, quotients by finite groups commute with this base change by \cite{stacks-project} Tag 0BRH. Thus the upper horizontal arrow induces an isomorphism
\begin{equation}\label{eq:auxiliary-closed-product}
    \overline Y^1/U_M^F \xrightarrow{\sim} S_M^1 \times \mathbb A^1,
\end{equation}
extending \eqref{eq:auxiliary-open-product}. The reduced image of $E^1$ is the zero section by construction. Smoothness of $\overline Y^1$ and of its divisor $E^1$ follows by étale pullback from $(\overline Y^0, E^0)$.

Finally, $\overline Y^0/H = C_M^*$ so the geometric fibres of $E^0 \to C_M$ are $H$-orbits. The group $U_M^F$ fixes $C_M$ pointwise and acts on each such fibre by $H$-equivariant automorphisms. Since $H$ is abelian of order prime to $p$, the automorphism group of an $H$-orbit also has order prime to $p$. Thus $U_M^F$ fixes every geometric point of $E^0$, and acts trivially on $E^0$ because this variety is reduced. In the cartesian square above, $U_M^F$ acts through the $\overline Y^0$ factor. It therefore fixes $E^1$ pointwise as well.
\end{proof}

\subsection{Proof of the conjecture}

\begin{proposition}\label{prop:wang-maximal}
    The morphism \eqref{eq:wang-open} is an isomorphism for every maximal proper rational parabolic $M$ of $G$.
\end{proposition}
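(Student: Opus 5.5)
Throughout, $M$ is standard maximal with $J(M)=\{i\}$. The plan is to prove the statement first for the auxiliary covering $\overline Y^1$ of Lemma \ref{lem:auxiliary-cover-extension}, where the unipotent quotient is a genuine product, and then descend to $Y^0$ and to $Y$.

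\emph{Reduction to one component.} Since $Y\simeq Y^0\times^H T^{cF}$ and $U^F$ (hence $U_M^F$) preserves each connected component, both sides of \eqref{eq:wang-open} decompose as induced modules from $H$ to $T^{cF}$ of the corresponding objects for $Y^0$. The target can be computed on the open star $C_M^*$, because $E_M$ lies in $\overline\pi^{-1}(C_M^*)$ and $Rj'_*$ is local. It therefore suffices to show that
\begin{equation*}
    R\Gamma(Y^0,\Lambda)^{U_M^F}\to R\Gamma(E^0, j_0^*Rj^0_*\Lambda)
\end{equation*}
is an isomorphism, where $j^0\colon Y^0\hookrightarrow \overline Y^0$ and $j_0\colon E^0\hookrightarrow\overline Y^0$.

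\emph{The auxiliary covering.} Write $q\colon \overline Y^1\to \overline Y^1/U_M^F\simeq S_M^1\times\mathbb A^1$. Invariants under the $p$-group $U_M^F$ are exact, so
\begin{equation*}
    R\Gamma(Y^1,\Lambda)^{U_M^F}=R\Gamma(S_M^1\times\mathbb G_m,(q_*\Lambda)^{U_M^F})=R\Gamma(S_M^1\times\mathbb G_m,\Lambda),
\end{equation*}
and the last identification holds because the invariants of the pushforward along a finite quotient map are the constant sheaf. On the boundary side, $E^1$ is a smooth divisor fixed pointwise by $U_M^F$, and $q$ restricts to a finite universal homeomorphism $E^1\to S_M^1\times\{0\}$, which does not change étale cohomology. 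I then need
\begin{equation*}
    \bigl(q_*(j_1^*Rj^1_*\Lambda)\bigr)^{U_M^F}\simeq i_0^*R\iota_*\Lambda,
\end{equation*}
where $\iota\colon S_M^1\times\mathbb G_m\hookrightarrow S_M^1\times\mathbb A^1$ and $i_0$ is the zero section. I expect this to follow from proper base change along the finite map $q$, combined with commuting invariants past $i_0^*$ and $R\iota_*$, which is legitimate because invariants are exact and $q_*$ is exact. Once this holds, the morphism becomes the restriction
\begin{equation*}
    R\Gamma(S_M^1\times\mathbb G_m,\Lambda)\to R\Gamma(S_M^1, i_0^*R\iota_*\Lambda).
\end{equation*}
By Künneth, this is the identity on $R\Gamma(S_M^1)$ tensored with $R\Gamma(\mathbb G_m,\Lambda)\to R\Gamma(\{0\}, i_0^*R\iota_*\Lambda)$, and the latter is an isomorphism: by the local computation of tame nearby cycles, both sides are $\Lambda\oplus\Lambda(-1)[-1]$.

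\emph{Descent and main obstacle.} Since $\overline\pi^1$ is a finite étale $N$-torsor compatible with $U_M^F$, and $|N|$ is prime to $\ell$, taking $N$-invariants, which is exact, recovers the statement for $Y^0$ from that for $Y^1$; $N$-equivariance of each identification comes from Lemma \ref{lem:auxiliary-cover-extension}. Compatibility with $P^F$ (here $M^F$) and $T^{cF}$ holds because the map is a restriction morphism, hence natural. The main obstacle is the identification of the $U_M^F$-invariant boundary complex with the nearby-cycle complex on the quotient. Its naturality with respect to the restriction maps requires care: $q$ is ramified along $E^1$, and I must check that $U_M^F$-invariants of $q_*Rj^1_*\Lambda$ agree with $R\iota_*\Lambda$. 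My first attempt would be to use exactness of invariants together with the fact that $q|_{Y^1}$ is a $U_M^F$-torsor, giving
\begin{equation*}
    (q_*Rj^1_*\Lambda)^{U_M^F}=R\iota_*\bigl((q|_{Y^1})_*\Lambda\bigr)^{U_M^F}=R\iota_*\Lambda,
\end{equation*}
and then pull back along $i_0$, applying proper base change to the finite morphism $q$.
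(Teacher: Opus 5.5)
Your boundary computation only reaches the $U_M^F$-invariant part of the target. Proper base change and exactness of invariants do give
\[
\bigl(q_*\,j_1^*Rj^1_*\Lambda\bigr)^{U_M^F}\simeq i_0^*R\iota_*\Lambda ,
\]
and hence an isomorphism $R\Gamma(Y^1,\Lambda)^{U_M^F}\xrightarrow{\sim}R\Gamma(E^1,j_1^*Rj^1_*\Lambda)^{U_M^F}$. But the morphism you must invert has target $R\Gamma(E^1,j_1^*Rj^1_*\Lambda)$, with no invariants. Your sentence ``once this holds, the morphism becomes the restriction \ldots'' therefore silently assumes that $U_M^F$ acts trivially on the boundary complex $j_1^*Rj^1_*\Lambda$.

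The fact that $U_M^F$ fixes $E^1$ pointwise only gives this on $j_1^*j^1_*\Lambda=\Lambda$. By purity for the smooth pair $(\overline Y^1,E^1)$, the other cohomology sheaf is $j_1^*R^1j^1_*\Lambda\simeq\Lambda(-1)$. A priori $U_M^F$ could act on it through a nontrivial character of $p$-power order; such characters exist in $(\mathbb Z/\ell^a\mathbb Z)^\times$ when $p\mid\ell-1$. Pointwise triviality on $E^1$ does not see this normal direction.

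The paper supplies exactly this missing step. The degree-one purity isomorphism is given by the Kummer class of a local equation of $E^1$. An element of $U_M^F$ changes that equation by a unit, and a unit in a strictly henselian local ring is an $\ell^a$-th power, so the Kummer class is unchanged. With exactness of invariants this gives $(j_1^*Rj^1_*\Lambda)^{U_M^F}\simeq j_1^*Rj^1_*\Lambda$, and only then does your identification of the morphism go through. You could also extract triviality from your own computation: the invariants of a free rank-one $\Lambda$-module under a nontrivial character of order prime to $\ell$ are zero, whereas you found them locally free of rank one. Either way, some argument is needed. So the ``main obstacle'' you single out, computing the invariants of $q_*Rj^1_*\Lambda$, is the routine part. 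The substantive point is that the boundary complex coincides with its own invariants.

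The descent step also contains an error: $|N|$ need not be prime to $\ell$. Indeed $N\subseteq\prod_j\mu_{m_j}$ with $m_j$ dividing $|H|$, and $|H|$ divides $|T^{cF}|$. That order is prime to $p$ but may be divisible by $\ell$, which is precisely the case of interest in the modular theory. So $N$-invariants are not exact, and you must use derived invariants instead:
\begin{enumerate}[label=(\roman*)]
\item Cohomological descent for the $N$-torsors $Y^1\to Y^0$ and $E^1\to E^0$ gives $R\Gamma(Y^0,\Lambda)\simeq R\Gamma(N,R\Gamma(Y^1,\Lambda))$ and its boundary analogue. The boundary case uses the base change $(\overline\pi^1|_{E^1})^*j_0^*Rj^0_*\Lambda\simeq j_1^*Rj^1_*\Lambda$, valid because $\overline\pi^1$ is étale.
\item $U_M^F$-invariants commute with $R\Gamma(N,-)$, since they are exact and the two actions commute.
\item Applying $R\Gamma(N,-)$ to the $N$-equivariant isomorphism on $Y^1$ then yields the statement for $Y^0$.
\end{enumerate}
The remaining steps (induction from $H$ to $T^{cF}$, Künneth, and the computation on $\mathbb A^1$) agree with the paper.
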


\begin{proof}
We first consider the standard parabolic $M$ fixed in Section \ref{subsec:wang-maximal}. For $\varepsilon \in \{0, 1\}$, write
\begin{equation*}
    j_{\varepsilon}: Y^{\varepsilon} \hookrightarrow \overline Y^{\varepsilon}, \qquad i_{\varepsilon}: E^{\varepsilon} \hookrightarrow \overline Y^{\varepsilon}.
\end{equation*}
Following \cite{wangCohomologieCompactificationVarietes2014} Section 5.2.2, we first prove that restriction induces an isomorphism
\begin{equation}\label{eq:auxiliary-restriction}
    R\Gamma(Y^1, \Lambda)^{U_M^F} \xrightarrow{\sim} R\Gamma(E^1, i_1^*Rj_{1,*}\Lambda).
\end{equation}
We will then descend this comparison to $Y$.

Using \eqref{eq:auxiliary-closed-product}, denote the quotient morphism and the immersions by
\begin{equation*}
    \begin{split}
        q_1&: \overline Y^1 \to S_M^1 \times \mathbb A^1,\\
        j_{\times}&: S_M^1 \times \mathbb G_m \hookrightarrow S_M^1 \times \mathbb A^1,\\
        i_{\times}&: S_M^1 \hookrightarrow S_M^1 \times \mathbb A^1,
    \end{split}
\end{equation*}
where $i_{\times}$ is the zero section. We regard $(q_1)_{|Y^1}$ and $(q_1)_{|E^1}$ as morphisms to $S_M^1 \times \mathbb G_m$ and to $S_M^1$ respectively.

We first show that taking $U_M^F$-invariants does not change the boundary complex $i_1^*Rj_{1,*}\Lambda$. Since $(\overline Y^1, E^1)$ is a smooth pair, cohomological purity gives
\begin{equation*}
    i_1^*R^m j_{1,*}\Lambda \simeq \begin{cases}
        \Lambda & \text{if } m = 0,\\
        \Lambda(-1) & \text{if } m = 1,\\
        0 & \text{else},
    \end{cases}
\end{equation*}
see \cite{artinTheorieToposCohomologie1973} Exposé XVI Théorème 3.7, and Exposé XIX Paragraphs (3.3.1) and (3.3.2). The action of $U_M^F$ on both nonzero sheaves is trivial. Indeed, in degree zero this follows from its trivial action on $E^1$. In degree one, up to the Tate twist, the identification is given by the Kummer class of a local equation of $E^1$. An element of $U_M^F$ changes this equation by a unit. In a strictly henselian local ring, this unit has an $\ell^a$-th root, so its Kummer class vanishes. Thus the class of the local equation is unchanged. Exactness of $U_M^F$-invariants now shows that the natural morphism
\begin{equation}\label{eq:boundary-invariants}
    (i_1^*Rj_{1,*}\Lambda)^{U_M^F} \xrightarrow{\sim} i_1^*Rj_{1,*}\Lambda,
\end{equation}
is an isomorphism.

The morphism $(q_1)_{|Y^1}$ is a finite étale $U_M^F$-torsor, so
\begin{equation*}
    \left( (q_1)_{|Y^1*}\Lambda \right)^{U_M^F} \simeq \Lambda.
\end{equation*}
Since $(q_1)_{|Y^1}$ is finite, its higher direct images vanish. Taking $U_M^F$-invariants, which commute with derived global sections, gives
\begin{equation}\label{eq:auxiliary-source}
    R\Gamma(Y^1,\Lambda)^{U_M^F} \simeq R\Gamma\left( S_M^1 \times \mathbb G_m, \left( (q_1)_{|Y^1*}\Lambda \right)^{U_M^F} \right) \simeq R\Gamma(S_M^1 \times \mathbb G_m, \Lambda).
\end{equation}

On the boundary, proper base change for the finite morphism $q_1$ gives
\begin{equation}\label{eq:boundary-base-change}
    R(q_1)_{|E^1*}i_1^*Rj_{1,*} \Lambda \simeq i_{\times}^*Rq_{1,*}Rj_{1,*} \Lambda \simeq i_{\times}^*Rj_{\times,*} (q_1)_{|Y^1*} \Lambda.
\end{equation}
Taking $U_M^F$-invariants in \eqref{eq:boundary-base-change} and using \eqref{eq:boundary-invariants} yields 
\begin{equation*}
    R(q_1)_{|E^1*}i_1^*Rj_{1,*} \Lambda \simeq i_{\times}^*Rj_{\times,*}\Lambda.
\end{equation*}
Taking global sections therefore yields
\begin{equation}\label{eq:auxiliary-boundary}
    R\Gamma(E^1, i_1^*Rj_{1,*} \Lambda) \simeq R\Gamma(S_M^1, i_{\times}^*Rj_{\times,*} \Lambda).
\end{equation}
Finally, \eqref{eq:auxiliary-source} and \eqref{eq:auxiliary-boundary} identify the restriction
morphism in \eqref{eq:auxiliary-restriction} with the natural map
\begin{equation}\label{eq:quotient-restriction}
    R\Gamma(S_M^1 \times \mathbb G_m, \Lambda) \to R\Gamma(S_M^1, i_{\times}^*Rj_{\times,*} \Lambda).
\end{equation}
Let $j_{\mathbb G_m}: \mathbb G_m \hookrightarrow \mathbb A^1$ and $i_{\{0\}}: \{0\} \hookrightarrow \mathbb A^1$ be the natural immersions. Since $S_M^1$ is smooth, the Künneth formula and smooth base change identify \eqref{eq:quotient-restriction} with the derived tensor product of $R\Gamma(S_M^1, \Lambda)$ and the map
\begin{equation*}
    R\Gamma(\mathbb G_m, \Lambda) \to i_{\{0\}}^*Rj_{\mathbb G_m,*}\Lambda.
\end{equation*}
The latter morphism is an isomorphism. Indeed, both sides have $\Lambda$ in degree zero, $\Lambda(-1)$ in degree one, and no other non-zero cohomology groups. The morphism identifies the constant classes and, after a Tate twist, sends the Kummer class of $w_i$ to its local Kummer class at zero. This proves that \eqref{eq:auxiliary-restriction} really is an isomorphism. Since it is induced by restriction and since the actions of $N$ and $U_M^F$ commute, this isomorphism is $N$-equivariant.

We now descend the comparison in two steps, following \cite{wangCohomologieCompactificationVarietes2014} Section 5.2.3. First, Lemma \ref{lem:auxiliary-cover-extension} gives finite étale $N$-torsors
\begin{equation*}
    \pi^1: Y^1 \to Y^0, \qquad \overline{\pi}^1_{|E^1}: E^1 \to E^0.
\end{equation*}
Since $\overline{\pi}^1$ itself is étale, formation of $Rj_{0,*}$ commutes with pullback along $\overline{\pi}^1$. Restricting the resulting isomorphism to $E^1$ gives
\begin{equation*}
    (\overline{\pi}^1_{|E^1})^* (i_0^*Rj_{0,*} \Lambda) \simeq i_1^*Rj_{1,*}\Lambda.
\end{equation*}
Write $R\Gamma(N, -)$ for derived $N$-invariants. Cohomological descent for the torsor $\pi^1$ gives
\begin{equation*}
    R\Gamma(Y^0, \Lambda) \simeq R\Gamma\left( N, R\Gamma(Y^1, \Lambda) \right).
\end{equation*}
The actions of $N$ and $U_M^F$ commute, and taking $U_M^F$-invariants is exact. We may therefore commute $U_M^F$-invariants with derived $N$-invariants, obtaining
\begin{equation*}
    R\Gamma(Y^0, \Lambda)^{U_M^F} \simeq R\Gamma\left( N, R\Gamma(Y^1, \Lambda) \right)^{U_M^F} \simeq R\Gamma\left( N, R\Gamma(Y^1, \Lambda)^{U_M^F} \right).
\end{equation*}
On the boundary, we apply cohomological descent to the torsor $\overline{\pi}^1_{|E^1}$ with coefficients in $i_0^*Rj_{0,*}\Lambda$. Using the base-change isomorphism above to identify its pullback to $E^1$, we obtain
\begin{equation*}
    \begin{split}
        R\Gamma(E^0, i_0^*Rj_{0,*}\Lambda) & \simeq R\Gamma \left( N, R\Gamma \left(E^1, (\overline{\pi}^1_{|E^1})^* (i_0^*Rj_{0,*}\Lambda) \right) \right)\\
        & \simeq R\Gamma \left( N, R\Gamma(E^1, i_1^*Rj_{1,*}\Lambda) \right).
    \end{split}
\end{equation*}
These identifications are compatible with restriction. Applying $R\Gamma(N, -)$ to \eqref{eq:auxiliary-restriction} therefore shows that restriction induces an isomorphism
\begin{equation*}
    R\Gamma(Y^0, \Lambda)^{U_M^F} \xrightarrow{\sim} R\Gamma(E^0, i_0^*Rj_{0,*}\Lambda).
\end{equation*}
This morphism is $H$-equivariant.

Second, the decomposition $Y \simeq Y^0 \times^H T^{cF}$ extends by normalization to
\begin{equation*}
    \overline{\pi}^{-1}(C_M^*) \simeq \overline Y^0 \times^H T^{cF}, \qquad E_M \simeq E^0 \times^H T^{cF}.
\end{equation*}
Since $U_M^F$ preserves every connected component and commutes with $T^{cF}$, the restriction morphism for $Y$ is obtained from that for $Y^0$ by induction from $H$ to $T^{cF}$. This induction is exact because $\Lambda[T^{cF}]$ is free over $\Lambda[H]$. We therefore obtain the desired isomorphism
\begin{equation*}
    R\Gamma(Y, \Lambda)^{U_M^F} \xrightarrow{\sim} R\Gamma(E_M, j_M'^*Rj'_*\Lambda).
\end{equation*}

Finally, every maximal proper rational parabolic is $G^F$-conjugate to a standard one. The actions of $G^F$ on $Y$ and $\overline Y$ identify the corresponding restriction morphisms, proving the assertion for every maximal proper rational parabolic $M$. 
\end{proof}

\begin{proof}[Proof of Theorems \ref{thm:wang} and \ref{thm:wang-open}]
Proposition \ref{prop:wang-maximal} applies to every Deligne-Lusztig covering of any Coxeter variety for any pair $(G, F)$. In particular, it applies to $(L_P, F_P)$ for every rational parabolic $P$ of $G$, including $P = G$. The hypothesis of Lemma \ref{lem:wang-reduction} is therefore satisfied, which proves Theorem \ref{thm:wang-open}. The equivalence recalled in Section \ref{subsec:coxeter-cover} then gives Theorem \ref{thm:wang} as well.
\end{proof}

\begin{remark}\label{rem:restriction-adic}
    Taking derived inverse limits shows that Theorems \ref{thm:wang} and \ref{thm:wang-open} also hold with $\mathbb Z_{\ell}$-coefficients, and with $\mathbb Q_{\ell}$-coefficients as well.
\end{remark}

\section{Alvis-Curtis duality}\label{sec:alvis-curtis}

We keep the notations of Section \ref{sec:restriction}. We begin by recalling the complexes defining Alvis-Curtis duality, and then we use Theorem \ref{thm:wang} to compare ordinary and compactly supported cohomology of $Y$.

\subsection{The duality functors}

For a rational parabolic $P$, Harish-Chandra restriction sends a $\Lambda[G^F]$-module $V$ to $V^{U_P^F}$, regarded as a module for $L_P^{F_P} \simeq P^F/U_P^F$. Inflating this module to $P^F$ and inducing to $G^F$ gives $\Lambda[G^F] \otimes_{\Lambda[P^F]} V^{U_P^F}$. Alvis-Curtis duality combines these operations for all the different rational parabolic types. 

Recall that $r = |\Delta/\phi|$ and let $\mathcal P_t$ be the set of rational parabolic subgroups of relative corank $t$. Thus $\mathcal P_0 = \{G\}$ while $\mathcal P_r$ is the set of rational Borel subgroups. For every rational parabolic $P$, we define
\begin{equation*}
    e_P \coloneq \frac{1}{|U_P^F|} \sum_{u \in U_P^F} u \in \Lambda[U_P^F].
\end{equation*}
This idempotent projects $V$ onto $V^{U_P^F} = e_PV$. The direct sum $\bigoplus_{P \in \mathcal P_t} V^{U_P^F}$ carries a $G^F$-action as follows. An element $g \in G^F$ sends $v$ in the summand indexed by $P$ to $g\cdot v$ in the summand indexed by $gPg^{-1}$. Grouping the summands by parabolic type gives a natural identification
\begin{equation*}
    \bigoplus_{P \in \mathcal P_t} V^{U_P^F} \simeq \bigoplus_{\substack{I \subseteq \Delta, \ \phi(I) = I,\\ d(P_I) = t}} \Lambda[G^F] \otimes_{\Lambda[P_I^F]} V^{U_I^F}.
\end{equation*}
Fix an ordering of $\Delta/\phi$. For a bounded complex $V^{\bullet}$ of $\Lambda[G^F]$-modules, we define
\begin{equation}\label{eq:ac-counit}
    D_G(V^{\bullet}) \coloneq \mathrm{Tot} \left[ \bigoplus_{P \in \mathcal P_r} (V^{\bullet})^{U_P^F} \to \cdots \to \bigoplus_{P \in \mathcal P_1}(V^{\bullet})^{U_P^F} \to V^{\bullet} \right],
\end{equation}
where the corank-$t$ column is placed in degree $-t$. For $Q \subset P$ with $d(Q) = d(P)+1$, the corresponding component of the differential is the inclusion $(V^{\bullet})^{U_Q^F} \hookrightarrow (V^{\bullet})^{U_P^F}$, multiplied by the sign determined by the fixed ordering of $\Delta/\phi$. 

We also consider the complex
\begin{equation}\label{eq:ac-unit}
    A_G(V^{\bullet}) \coloneq \mathrm{Tot} \left[ V^{\bullet} \to \bigoplus_{P \in \mathcal P_1}(V^{\bullet})^{U_P^F} \to \cdots \to \bigoplus_{P \in \mathcal P_r} (V^{\bullet})^{U_P^F} \right],
\end{equation}
where the corank-$t$ column is placed in degree $t$. For the same inclusion $Q \subset P$, its differential is $e_Q: (V^{\bullet})^{U_P^F} \to (V^{\bullet})^{U_Q^F}$, multiplied by the sign determined by the fixed ordering of $\Delta/\phi$. 

The functors $D_G$ and $A_G$ are mutually inverse autoequivalences of the derived category $D^b(\Lambda[G^F]\text{-}\mathrm{mod})$, see \cite{cabanes_alvis-curtis_2001} Theorem 5.1 and \cite{cabanes_okuyamas_2009} Theorem 16.

\subsection{Ordinary and compactly supported cohomology}

Theorem \ref{thm:wang} identifies the cohomology of each closed stratum $\overline{C_P}$ with coefficients in $Rj_*\pi_*\Lambda$, with the $U_P^F$-invariants of $R\Gamma(Y, \Lambda)$. We apply these identifications to the \v{C}ech complex for the covering of $\overline X \setminus X$ by its boundary divisors, and we relate it to the complex defining $A_G R\Gamma(Y, \Lambda)$.

\begin{corollary}\label{cor:coxeter-duality}
    There are isomorphisms
    \begin{equation*}
        D_G R\Gamma_c(Y, \Lambda) \simeq R\Gamma(Y, \Lambda), \qquad A_G R\Gamma(Y, \Lambda) \simeq R\Gamma_c(Y, \Lambda)
    \end{equation*}
    in $D^b \left( \Lambda[G^F] \otimes_{\Lambda} \Lambda[T^{cF}]^{\mathrm{op}} \text{-}\mathrm{mod} \right)$. Under the second isomorphism, the projection to the column placed in degree zero in \eqref{eq:ac-unit} corresponds to the natural morphism
    \begin{equation*}
        R\Gamma_c(Y, \Lambda) \to R\Gamma(Y, \Lambda).
    \end{equation*}
\end{corollary}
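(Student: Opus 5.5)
We plan to prove the second isomorphism $A_G R\Gamma(Y,\Lambda) \simeq R\Gamma_c(Y,\Lambda)$ first. The first then follows by applying $D_G$, since $D_G$ and $A_G$ are mutually inverse autoequivalences. We work on $\overline X$ with the complex $K \coloneq Rj_*\pi_*\Lambda$. The starting point is the identification
\begin{equation*}
  R\Gamma_c(Y,\Lambda) \simeq R\Gamma_c(X,\pi_*\Lambda) \simeq R\Gamma(\overline X, j_!\pi_*\Lambda),
\end{equation*}
using properness of $\overline X$. We will resolve $j_!\pi_*\Lambda$ by a \v{C}ech-type complex over the boundary.

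Let $\mathcal M$ be the set of maximal proper rational parabolics, and let $Z = \overline X\setminus X = \bigcup_{M\in\mathcal M}\overline{C_M}$, which is a simple normal crossings divisor. For a finite intersection we have $\bigcap_{M\supseteq P}\overline{C_M} = \overline{C_P}$ whenever the $M$ are exactly the maximal parabolics containing $P$. Any intersection of divisors not all containing a common rational parabolic is empty. Indeed, divisors of the same type are disjoint, and a family of pairwise distinct types with nonempty intersection meets in a closed stratum $\overline{C_P}$ by \eqref{eq:stratification}.

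Consequently the standard exact resolution for a closed cover by closed subschemes,
\begin{equation*}
  0\to j_!j^*K \to K \to \bigoplus_{P\in\mathcal P_1} i_{P*}i_P^*K \to \bigoplus_{P\in\mathcal P_2} i_{P*}i_P^*K\to\cdots\to \bigoplus_{P\in\mathcal P_r} i_{P*}i_P^*K \to 0,
\end{equation*}
is exact. We check exactness stalkwise: at a point of $C_Q$ the terms form the augmented simplex complex of the $d(Q)$ divisors through it, with the same signs fixed by the ordering of $\Delta/\phi$. Since $j^*K = \pi_*\Lambda$, applying $R\Gamma(\overline X,-)$ and totalizing gives
\begin{equation*}
  R\Gamma_c(Y,\Lambda)\simeq \mathrm{Tot}\Bigl[R\Gamma(X,\pi_*\Lambda)\to \bigoplus_{P\in\mathcal P_1} R\Gamma(\overline{C_P},i_P^*K)\to\cdots\to\bigoplus_{P\in\mathcal P_r} R\Gamma(\overline{C_P},i_P^*K)\Bigr],
\end{equation*}
with restriction maps as differentials. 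To make this a genuine complex of $\Lambda[G^F]\otimes\Lambda[T^{cF}]^{\mathrm{op}}$-modules rather than a diagram in the derived category, we will use a $G^F\times T^{cF}$-equivariant functorial model, for instance Godement resolutions of $\pi_*\Lambda$ on $\overline X$ and the strict restriction maps between them. The $G^F$-action permutes the summands exactly as in \eqref{eq:ac-unit}.

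We then apply Theorem \ref{thm:wang} termwise. Each term $R\Gamma(\overline{C_P},i_P^*K)$ receives the restriction map from $R\Gamma(X,\pi_*\Lambda)^{U_P^F}$, and this map is an isomorphism. For $Q\subset P$, the restriction $\overline{C_P}\to\overline{C_Q}$ composed with these isomorphisms is the map $e_Q\colon R\Gamma^{U_P^F}\to R\Gamma^{U_Q^F}$. This holds because both restriction maps come from the single map $R\Gamma(X)\to R\Gamma(\overline{C_Q})$, which is $U_Q^F$-invariant and therefore factors through $e_Q$. Since the inclusion $R\Gamma^{U_P^F}\hookrightarrow R\Gamma$ followed by $e_Q$ is $e_Q$, the squares commute on the nose at the model level.

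Hence the column maps assemble into a morphism from the complex $A_G R\Gamma(Y,\Lambda)$ to the \v{C}ech complex above. This morphism is a quasi-isomorphism column by column, so it is a quasi-isomorphism of totalizations. It respects the $G^F$- and $T^{cF}$-actions because the isomorphisms of Theorem \ref{thm:wang} are $P^F$- and $T^{cF}$-equivariant, and the summands are transported by $G^F$-conjugation. The degree-zero column is $R\Gamma(\overline X,K)=R\Gamma(Y,\Lambda)$, and the projection to it corresponds to the map induced by $j_!\to Rj_*$, which is the natural map $R\Gamma_c\to R\Gamma$.

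The main obstacle is the strictification step: we must make the identifications of Theorem \ref{thm:wang} hold compatibly as maps of complexes with signs and equivariance, and not merely objectwise in the derived category. The way we plan to handle it is to realize everything inside one equivariant model of $R\Gamma(\overline X,K)$, taking $U_P^F$-invariants there (an exact operation since $p$ is invertible in $\Lambda$) and using genuine restriction maps. The exactness of the \v{C}ech resolution near deeper strata also needs care, since we must confirm the emptiness of intersections of divisors of the same type.
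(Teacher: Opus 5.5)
\textbf{Verdict: there is a genuine gap, in the strictification step you flagged, and the fix you propose for it fails.} Your overall architecture matches the paper's: a \v{C}ech-type resolution of $j_!\pi_*\Lambda$ by the closed strata, followed by Theorem \ref{thm:wang} applied termwise. Your one-step resolution is equivalent to the paper's localization triangle combined with the augmented \v{C}ech resolution of $i_*i^*Rj_*\pi_*\Lambda$.

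\textbf{Where the argument breaks.} Take genuine models $V^\bullet$, $V_P^\bullet$ with strict restriction maps $\varepsilon_P\colon V^\bullet\to V_P^\bullet$ and $\widetilde r_{PQ}$. Then $\varepsilon_Q$ is only $U_Q^F$-\emph{equivariant}; it is not $U_Q^F$-\emph{invariant}. The group $U_Q^F$ fixes $\overline{C_Q}$ pointwise, but it does not act trivially on the complex $V_Q^\bullet$. Its action on $i_Q^*Rj_*\pi_*\Lambda$ passes through germs of sections on $X$ near $\overline{C_Q}$, and there $U_Q^F$ moves points. For a Godement model this is already visible in degree zero. The action is trivial only on $H^*(V_Q^\bullet)$, and that fact is itself a consequence of Theorem \ref{thm:wang}. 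Hence in general $\varepsilon_Q\circ e_Q=e_Q\circ\varepsilon_Q\neq\varepsilon_Q$ at chain level. Already the first square, comparing $e_P$ with $\varepsilon_P$ out of $V^\bullet$, fails to commute. So your column maps $(\varepsilon_P)_{|e_PV^\bullet}$ do not form a chain map from $A_G(V^\bullet)$ to the \v{C}ech complex. Your justification (``$U_Q^F$-invariant, hence factors through $e_Q$'') only shows that the squares commute in the derived category. That is not enough to produce a morphism of totalizations with $r+1$ columns, since higher coherences would be needed.

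\textbf{The missing idea: an intermediate complex.} Replace each $V_P^\bullet$ by $e_PV_P^\bullet$. Take the first differential to be $(e_P\varepsilon_P)_P$ and the later differentials to be $e_Q\widetilde r_{PQ}$, with signs. This is a complex because $\widetilde r_{PQ}$ is $U_Q^F$-equivariant and $e_Qe_P=e_Q$, which together give $e_Q\widetilde r_{PQ}e_P=e_Q\widetilde r_{PQ}$. Two maps land in it:
\begin{itemize}
\item The \v{C}ech complex maps to it by $\bigoplus e_P$. This is a columnwise quasi-isomorphism, because $U_P^F$ acts trivially on $H^*(V_P^\bullet)$ by Theorem \ref{thm:wang} and taking invariants is exact.
\item $A_G(V^\bullet)$ maps to it by $\varepsilon'_P=e_P\circ(\varepsilon_P)_{|e_PV^\bullet}$. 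These maps commute strictly with the differentials: both $(e_Q\widetilde r_{PQ})\varepsilon'_P$ and $\varepsilon'_Q e_Q$ equal $e_Q\varepsilon_Q$ on $e_PV^\bullet$, and $\varepsilon'_Pe_P=e_P\varepsilon_P$.
\end{itemize}
The comparison is therefore a zigzag of columnwise quasi-isomorphisms through this middle row, not a direct map, and this is how the paper proves $A_G R\Gamma(Y,\Lambda)\simeq R\Gamma_c(Y,\Lambda)$.
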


\begin{proof}
Let $D = (\overline X \setminus X)_{\mathrm{red}}$ with closed immersion $i: D \hookrightarrow \overline X$. Applying $R\Gamma(\overline X, -)$ to the distinguished triangle
\begin{equation*}
    j_!\pi_*\Lambda \to Rj_*\pi_*\Lambda \to i_*i^*Rj_*\pi_*\Lambda \xrightarrow{+1}
\end{equation*}
gives the distinguished triangle
\begin{equation}\label{eq:boundary-localization}
    R\Gamma_c(Y, \Lambda) \to R\Gamma(Y, \Lambda) \to R\Gamma(D, i^*Rj_*\pi_*\Lambda) \xrightarrow{+1}.
\end{equation}
Here, properness of $\overline X$ and finiteness of $\pi$ allow us to identify $R\Gamma(\overline X, j_!\pi_*\Lambda)$ with $R\Gamma_c(Y, \Lambda)$.

We compute $R\Gamma(D, i^*Rj_*\pi_*\Lambda)$ using the finite closed covering of $D$ by the boundary divisors $\overline{C_M}$. If $\overline{C_{M_1}}, \ldots, \overline{C_{M_t}}$ are distinct divisors with nonempty intersection, then
\begin{equation*}
    \overline{C_{M_1}} \cap \cdots \cap \overline{C_{M_t}} = \overline{C_P} \text{ where } P \coloneq M_1 \cap \cdots \cap M_t, \text{ and we have } d(P) = t.
\end{equation*}
The equality follows from \eqref{eq:stratification}, and $d(P) = t$ because the divisors have distinct types. Conversely, every rational parabolic of corank $t$ is the intersection of the $t$ maximal proper rational parabolics containing it.

We order the divisors by their types according to the fixed ordering of $\Delta/\phi$. The restriction maps to the closed strata give the augmented \v{C}ech complex on $\overline X$
\begin{equation*}
    0 \to i_*i^*Rj_*\pi_*\Lambda \to \bigoplus_{P \in \mathcal P_1} (i_P)_*i_P^*Rj_*\pi_*\Lambda \to \cdots \to \bigoplus_{P \in \mathcal P_r} (i_P)_*i_P^*Rj_*\pi_*\Lambda \to 0.
\end{equation*}
The signs of the differentials are determined by the ordering of their types. By \cite{artinTheorieToposCohomologie1972a} Exposé VIII Lemme 8.2 and Remarque 8.3(a), this complex is a resolution of $i_*i^*Rj_*\pi_*\Lambda$.

Applying $R\Gamma(\overline X, -)$ we obtain
\begin{equation*}
    R\Gamma(D, i^*Rj_*\pi_*\Lambda) \simeq \mathrm{Tot} \Bigl[ \bigoplus_{P \in \mathcal P_1} R\Gamma(\overline{C_P}, i_P^*Rj_*\pi_*\Lambda) \to \cdots \to \bigoplus_{P \in \mathcal P_r} R\Gamma(\overline{C_P}, i_P^*Rj_*\pi_*\Lambda) \Bigr],
\end{equation*}
where the corank-$t$ column is placed in degree $t-1$. For $Q \subset P$, we denote the restriction morphism by
\begin{equation*}
    r_{PQ}: R\Gamma(\overline{C_P}, i_P^*Rj_*\pi_*\Lambda) \to R\Gamma(\overline{C_Q}, i_Q^*Rj_*\pi_*\Lambda).
\end{equation*}
The differentials are the maps $r_{PQ}$ with $d(Q) = d(P)+1$, multiplied by adequate sign.

By \eqref{eq:boundary-localization} we have
\begin{equation*}
    R\Gamma_c(Y, \Lambda) \simeq \mathrm{Cone}\left( R\Gamma(Y, \Lambda) \to R\Gamma(D, i^*Rj_*\pi_*\Lambda) \right)[-1].
\end{equation*}
By substitution in the preceding \v{C}ech complex, we obtain
\begin{equation}\label{eq:ac-boundary-complex}
    R\Gamma_c(Y, \Lambda) \simeq \mathrm{Tot}\Bigl[ R\Gamma(Y, \Lambda) \to \bigoplus_{P \in \mathcal P_1} R\Gamma(\overline{C_P}, i_P^*Rj_*\pi_*\Lambda) \to \cdots \to \bigoplus_{P \in \mathcal P_r} R\Gamma(\overline{C_P}, i_P^*Rj_*\pi_*\Lambda) \Bigr],
\end{equation}
with $R\Gamma(Y, \Lambda)$ placed in degree zero and the corank-$t$ column in degree $t$. Projection to the degree zero column is then the natural morphism $R\Gamma_c(Y, \Lambda) \to R\Gamma(Y, \Lambda)$.

We now compare \eqref{eq:ac-boundary-complex} with \eqref{eq:ac-unit} applied to $R\Gamma(Y, \Lambda)$. Using functorial and equivariant resolutions, we choose cochain complexes $V^{\bullet}$ and $V_P^{\bullet}$ representing
\begin{equation*}
    V^{\bullet} \simeq R\Gamma(Y, \Lambda), \qquad V_P^{\bullet} \simeq R\Gamma(\overline{C_P}, i_P^*Rj_*\pi_*\Lambda).
\end{equation*}
They come together with chain maps, where $Q \subset P$, 
\begin{equation*}
    \varepsilon_P: V^{\bullet} \to V_P^{\bullet}, \qquad \widetilde r_{PQ}: V_P^{\bullet} \to V_Q^{\bullet},
\end{equation*}
representing the restriction morphisms and satisfying, for $R \subset Q \subset P$,
\begin{equation*}
    \widetilde r_{PQ} \varepsilon_P = \varepsilon_Q, \qquad \widetilde r_{QR} \widetilde r_{PQ} = \widetilde r_{PR}.
\end{equation*}
By Theorem \ref{thm:wang}, the restriction $(\varepsilon_P)_{|e_PV^{\bullet}}: e_PV^{\bullet} \to V_P^{\bullet}$ is a $P^F$-equivariant quasi-isomorphism. Since $U_P^F$ acts trivially on its source, it also acts trivially on $H^*(V_P^{\bullet})$. Exactness of taking $U_P^F$-invariants therefore shows that the projection $e_P: V_P^{\bullet} \to e_PV_P^{\bullet}$ is a quasi-isomorphism. By composition, we get a quasi-isomorphism
\begin{equation*}
    \varepsilon'_P \coloneq e_P \circ (\varepsilon_P)_{|e_PV^{\bullet}}: e_PV^{\bullet} \to e_PV_P^{\bullet}.
\end{equation*}
We now compare the boundary complex in \eqref{eq:ac-boundary-complex} with the complex defining $A_G(V^{\bullet})$ by mapping both of them to a third complex. All three have $V^{\bullet}$ in horizontal degree zero. The comparison will be given by the following diagram, whose columns are placed from left to right in degree $0, 1, \ldots, r$.
\begin{equation*}
    \begin{tikzcd}[row sep=large]
        V^{\bullet} \arrow[r] \arrow[d, equal] & \bigoplus_{P \in \mathcal P_1} V_P^{\bullet} \arrow[r] \arrow[d, "\bigoplus e_P"] & \cdots \arrow[r] & \bigoplus_{P \in \mathcal P_r}V_P^{\bullet} \arrow[d, "\bigoplus e_P"] \\
        V^{\bullet} \arrow[r] & \bigoplus_{P \in \mathcal P_1}e_PV_P^{\bullet} \arrow[r] & \cdots \arrow[r] & \bigoplus_{P \in \mathcal P_r} e_PV_P^{\bullet} \\
        V^{\bullet} \arrow[r] \arrow[u, equal] & \bigoplus_{P \in \mathcal P_1} e_PV^{\bullet} \arrow[r] \arrow[u, swap, "\bigoplus\varepsilon'_P"] & \cdots \arrow[r] & \bigoplus_{P \in \mathcal P_r}e_PV^{\bullet} \arrow[u, swap, "\bigoplus\varepsilon'_P"]
    \end{tikzcd}
\end{equation*}
The top row represents the boundary complex in \eqref{eq:ac-boundary-complex} and the bottom row defines $A_G(V^{\bullet})$. In the middle row, the first map is $(e_P \varepsilon_P)_{P \in \mathcal P_1}$. For $Q \subset P$ with $d(Q) = d(P)+1$, the corresponding component between two successive terms is $e_Q\widetilde r_{PQ}: e_PV_P^{\bullet} \to e_QV_Q^{\bullet}$, multiplied by the adequate sign. We check that the middle row is a complex and that the diagram commutes.

For $Q \subset P$, the closed stratum $\overline{C_P}$ is $P^F$-stable, while $\overline{C_Q}$ is $Q^F$-stable. Since $U_Q^F \subseteq Q^F \subseteq P^F$, the restriction map $\widetilde r_{PQ}$ is $U_Q^F$-equivariant. Together with $e_Qe_P = e_Q$, this gives
\begin{equation*}
    e_Q \widetilde r_{PQ} e_P = e_Q \widetilde r_{PQ}.
\end{equation*}
Thus the upper squares commute, noting that the upper square involving $V^{\bullet}$ commutes by the definition of the maps $e_P \varepsilon_P$. The projections from the top row to the middle row are surjective in every degree. By diagram chasing, it follows that the middle row is a complex.

Besides, the identities
\begin{equation*}
    \widetilde r_{PQ} \varepsilon_P = \varepsilon_Q, \qquad e_Q\varepsilon_Q = \varepsilon_Qe_Q,
\end{equation*}
give $(e_Q \widetilde r_{PQ}) \varepsilon'_P = \varepsilon'_Qe_Q$. Thus the lower squares commute, noting that the lower square involving $V^{\bullet}$ commutes because $\varepsilon'_Pe_P = e_P\varepsilon_Pe_P = e_P\varepsilon_P$. Hence the diagram does commute.

Every vertical map is a quasi-isomorphism, thus the induced maps between their total complexes are quasi-isomorphisms. The totalization of the top row represents $R\Gamma_c(Y, \Lambda)$ by \eqref{eq:ac-boundary-complex}, while that of the bottom row represents $A_G R\Gamma(Y, \Lambda)$. We therefore obtain the desired identity
\begin{equation*}
    A_G R\Gamma(Y, \Lambda) \simeq R\Gamma_c(Y, \Lambda).
\end{equation*}
Applying the inverse functor $D_G$ gives the other asserted isomorphism. Finally, the restriction maps are compatible with conjugation by $G^F$, and $ge_Pg^{-1} = e_{gPg^{-1}}$ for every $g \in G^F$. Thus both comparisons are $G^F$-equivariant, and they also commute with the right $T^{cF}$-action.
\end{proof}

By Remark \ref{rem:restriction-adic}, the same statements hold with $\mathbb Z_{\ell}$-coefficients or $\mathbb Q_{\ell}$-coefficients.

\bibliographystyle{amsplain}
\bibliography{biblio}

\end{document}